\numberwithin{equation}{section}
\newtheorem{theorem}[equation]{Theorem}
\newtheorem{corollary}[equation]{Corollary}
\newtheorem{lemma}[equation]{Lemma}
\theoremstyle{definition}
\newtheorem{definition}[equation]{Definition}
\newtheorem{proposition}[equation]{Proposition}
\newcommand{\R}{\mathbb{R}}
\newcommand{\Z}{\mathbb{Z}}
\newcommand{\dif}{\mathrm{d}}
\newcommand{\sparse}{\mathcal{S}}
\newcommand{\cM}{\mathcal{M}}
\newcommand{\cF}{\mathcal{F}}
\newcommand{\cD}{\mathcal{D}}
\newcommand{\sop}{\mathcal{A}}
\newcommand{\sform}{\mathcal{B}}
\newcommand{\Car}{\mathrm{Car}}
\newcommand{\one}{\mathbf{1}}
\newcommand*{\widevec}[1]{\overrightarrow{#1}}
\DeclarePairedDelimiter\abs{\lvert}{\rvert}
\DeclarePairedDelimiter\meas{\lvert}{\rvert}
\DeclarePairedDelimiter\norm{\lVert}{\rVert}
\DeclarePairedDelimiter\Set\{\}
\begin{document}

\title[Multilinear $A_{p}$--$A_{\infty}$ estimates]{$A_{p}$--$A_{\infty}$ estimates for multilinear\\ maximal and sparse operators}
\author{Pavel Zorin-Kranich}
\date{}
\address{Mathematical Institute\\ University of Bonn}
\thanks{This work was partially supported by the Hausdorff Center for Mathematics (DFG EXC 59)}
\maketitle
\begin{abstract}
We obtain mixed $A_{p}$--$A_{\infty}$ estimates for a large family of multilinear maximal and sparse operators.
Operators from this family are known to control for instance multilinear Calder\'on--Zygmund operators, square functions, fractional integrals, and the bilinear Hilbert transform.
Our results feature a new multilinear version of the Fujii--Wilson $A_{\infty}$ characteristic that allows us to recover the best known estimates in terms of the $A_{p}$ characteristic for dependent weights as a special case of the mixed characteristic estimates for general tuples of weights.
\end{abstract}

\section{Introduction}
\subsection{Relation to previous work}
The problem of optimal dependence of constants in weighted inequalities on characteristics of the weights has been studied since the introduction of the subject by Muckenhoupt, who has addressed this question for the weak type inequalities for the Hardy--Littlewood maximal operator in his seminal paper \cite{MR0293384}.
A characterization of pairs of weights for which the maximal operator is bounded from $L^{p}(w_{1})$ to $L^{p}(w_{2})$ has been obtained by Sawyer \cite{MR676801}, but the asymptotically sharp dependence of the operator norm on the $A_{p}$ Muckenhoupt characteristic of the weight in the one-weight situation $w_{1}=w_{2}$ has been only obtained much later by Buckley \cite{MR1124164}.
Similar questions for singular integral operators have been resolved by a number of authors, culminating in Hytönen's $A_{2}$ theorem \cite{MR2912709}.
We refer to \cite{MR3204859} for a detailed account of these developments.

Mixed $A_{p}$--$A_{\infty}$ characteristic weighted estimates of the type that we are interested in have been first obtained in \cite{MR3092729}.
In particular, \cite[Theorem 1.10]{MR3092729} refines Buckley's estimate for the maximal function and \cite[Theorem 1.3]{MR3092729} refines Hytönen's $A_{2}$ theorem.
We do not state these results here because we are interested in their multilinear versions, and it seems more natural to discuss them in more detail after defining the relevant objects in the multilinear setting.

The proof of the $A_{2}$ theorem has been greatly simplified since its original appearance.
One of the main contributors to this effort has been Lerner, who has been able to estimate singular integral operators by a certain type of positive operators, called \emph{sparse operators}, see e.g.\ \cite{MR3085756}, and therefore reduce the problem of proving weighted estimates to the sparse operators.
An even simpler way to make this reduction has been later discovered by Lacey \cite{arXiv:1501.05818} and further refined in \cite{MR3625128,MR3484688,arXiv:1604.05506}.

Therefore we concentrate on weighted estimates for sparse operators.
The observation that the $A_{2}$ bound is very easy to prove for (linear) sparse operators seems to have been first made in \cite{MR2628851} and extended to $A_{p}$ in \cite{MR3000426}.
Mixed $A_{p}$--$A_{\infty}$ two-weight estimates for linear sparse operators and bilinear sparse forms that generalize these results have been obtained in \cite{MR3778152} and \cite{MR3591468}.

Weighted estimates for multilinear sparse forms with dependent weights (generalizing the one-weight inequalities for (sub)linear operators) have been obtained in \cite[Theorem 1.2]{MR3232584} and \cite[\textsection 16]{arXiv:1508.05639}.
For independent weights (corresponding to two-weight inequalities for (sub)linear operators), certain mixed $A_{p}$--$A_{\infty}$ estimates have been proved in \cite{arXiv:1508.06105,MR3850284}, however, they do not recover the previous results for dependent weights.
Our objective is to obtain such mixed estimates for independent weights that do recover the best known power of multilinear $A_{p}$ characteristic in the case of dependent weights.

We also obtain corresponding results for multi(sub)linear maximal functions (introduced in \cite{MR2483720}) which extend the sharp bounds in the case of dependent weights \cite[Theorem 1.2]{MR3232584}, refining some previous mixed characteristic bounds \cite[Theorem 3.6]{MR3118310} that do not.
Finally, our results are formulated in a way that makes them applicable also to fractional maximal functions and fractional integrals along the lines of \cite{MR3224572}.
We refer to \cite{MR3000426} and \cite{MR3483175} for the estimates of fractional integrals by sparse forms in the linear and the multilinear setting, respectively, and to \cite{MR3688143} for more weighted inequalities for fractional integrals.

We do not address the question of characterizing the families of weights for which our operators are bounded.
For recent work in the spirit of Sawyer's testing conditions for the maximal operator \cite{MR676801} see \cite{arXiv:0911.3437} and \cite{arXiv:1503.06778}, and in the multilinear setting \cite{MR3554405} and \cite{MR3490550}.

\subsection{Maximal functions and weights}
We will work in a non-atomic measure space $(X,\mu)$.
We will write $\meas{E}=\mu(E)$ for subsets $E\subset X$.
A \emph{dyadic grid} is a collection $\cD$ of measurable subsets of $X$ such that for any $Q,Q'\in\cD$ we have either $Q\subseteq Q'$ or $Q'\subseteq Q$ or $Q\cap Q'=\emptyset$.
We will call the members of $\cD$ ``cubes'' because we are mostly interested in the case of them being the standard dyadic cubes in $\R^{n}$.
Note however that we do not make any doubling assumption on the measure $\mu$, and our results apply to general discrete martingales in the setting of \cite{MR3406523} and \cite[\textsection 2]{arXiv:1501.05818}.

Since we only consider positive operators, all functions will be assumed to be positive in order to simplify notation.

\begin{definition}
Let $\cD$ be a dyadic grid, $m\geq 2$, $0<r_{i}<\infty$, and $0\leq \rho_{i}<1$ for $1\leq i<m$.
The \emph{multilinear martingale fractional maximal function} is defined by
\begin{equation}
\label{eq:mult-frac-max}
\cM^{\vec r}_{\vec \rho}(\vec f)(x)
:=
\sup_{x\in Q, Q\in\cD} \prod_{i=1}^{m-1} \Big(\meas{Q}^{-(1-\rho_{i})} \int_{Q} f_{i} \Big)^{r_{i}}.
\end{equation}
\end{definition}
We will omit the index $\vec\rho$ if $\vec\rho=(0,\dots,0)$ and the index $\vec r$ if $\vec r=(1,\dots,1)$.
In particular, $\cM$ is the martingale version of the multilinear maximal operator defined in \cite[Definition 3.1]{MR2483720}.
In the case $m=2$ we write $M^{r}_{\rho} = \cM^{\{r\}}_{\{\rho\}}$ with the same conventions for index omission.
In particular, $M_{\rho}$ is the martingale fractional maximal operator as in \cite{MR3000426}.
We will sometimes indicate the reference measure $\mu$ by a subscript to the maximal function.

\begin{definition}
A \emph{weight} is a non-negative measurable function on $X$.
Let $m\geq 2$ and $\vec w=(w_{1},\dots,w_{m})$ be a tuple of weights.
For arbitrary exponents $0\leq q_{1},\dots,q_{m}<\infty$ the \emph{multilinear Muckenhoupt characteristic} is defined by
\begin{equation}
\label{eq:def:Muck}
[\vec w]^{\vec q} := \sup_{Q} \prod_{i=1}^{m} (w_{i})_{Q}^{q_{i}} = \norm{\mathcal{M}^{\vec q}(\vec w)}_{\infty},
\end{equation}
where $(w)_{Q} := \meas{Q}^{-1} \int_{Q} w$ is the average of $w$ over $Q$,
and the \emph{multilinear Fujii--Wilson (FW) characteristic} by
\begin{equation}
\label{eq:def:FW}
[\vec w]_{FW}^{\vec q} := \sup_{Q} \Big( \int_{Q} \mathcal{M}^{\vec q/q} (1_{Q} \vec w) \Big)^{q} \Big( \int_{Q} \prod_{i=1}^{m} w_{i}^{q_{i}/q} \Big)^{-q},
\quad
q = \sum_{i=1}^{m} q_{i}.
\end{equation}
\end{definition}
The above way of defining the multilinear Muckenhoupt characteristic \eqref{eq:def:Muck} has been introduced in \cite{arXiv:1508.05639}, although with a different notation.
Our notation has the advantage that
\[
([w]^{\vec q})^{\alpha} = [w]^{\alpha \vec q}
\quad\text{and}\quad
([w]_{FW}^{\vec q})^{\alpha} = [w]_{FW}^{\alpha \vec q}
\]
holds for all $\vec q$ and $0\leq \alpha<\infty$.

For $m=1$ our FW characteristic coincides with (the $q_{1}$-th power of) the $A_{\infty}$ characteristic originating in the work of Fujii \cite{MR0481968} and Wilson \cite{MR883661}, which is in turn smaller than the $A_{\infty}$ characteristic based on the logarithmic maximal function originating in the work of Garc\'ia-Cuerva and Rubio de Francia \cite{MR807149} and Hru\v{s}\v{c}ev \cite{MR727244} (the latter observation has been first made in \cite{MR3092729}).
For $m\geq 2$ our version of the FW characteristic is smaller than the version introduced in \cite{MR3118310} and also smaller than the $H^{\infty}$ characteristic in \cite{MR3850284} (the latter being a consequence of $L^{1}$ boundedness of the logarithmic maximal function, see \cite[Lemma 2.1]{MR3092729}).
A key advantage of our FW characteristic is that, in the case of dependent weights, it can be estimated by a nice power of the multilinear Muckenhoupt characteristic:
\begin{lemma}
\label{lem:FW-Ap}
Let $m\geq 2$, weights $w_{1},\dots,w_{m}$, and exponents $0<q_{1},\dots,q_{m}<\infty$ be such that $\prod_{i=1}^{m} w_{i}^{q_{i}} = 1$.
Then for every choice of exponents $0\leq \beta_{1},\dots,\beta_{m}$ we have
\[
[\vec w]_{FW}^{\vec\beta} \lesssim [\vec w]^{\gamma \vec q}
\quad\text{with}\quad
\gamma = \max_{i} \beta_{i}/q_{i}.
\]
\end{lemma}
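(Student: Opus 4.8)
The plan is to reduce to a single-cube estimate and then, for a fixed cube $Q$, to dominate $\int_{Q}\cM^{\vec\beta}(\vec w\,1_{Q})\,\dif\mu$ by $[\vec w]^{\gamma\vec q}\int_{Q}W\,\dif\mu$, where $W:=\prod_{i=1}^{m}w_{i}^{\beta_{i}}$. The crucial move is to pass from the multilinear maximal function to the \emph{logarithmic} maximal operator of the single weight $W$: this is what allows the hypothesis $\prod_{i}w_{i}^{q_{i}}=1$ to be used so as to collapse the comparison constant to exactly $[\vec w]^{\gamma\vec q}$, and it sidesteps the fact that $W$ itself need not belong to $A_{\infty}$.

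First I would normalize. Using the homogeneity relations $([\vec w]_{FW}^{\vec q})^{\alpha}=[\vec w]_{FW}^{\alpha\vec q}$ and $([\vec w]^{\vec q})^{\alpha}=[\vec w]^{\alpha\vec q}$ recorded after \eqref{eq:def:FW}, dividing $\vec\beta$ by $\beta:=\sum_{i}\beta_{i}$ reduces the inequality to the case $\sum_{i}\beta_{i}=1$ (the degenerate case $\vec\beta=0$ being trivial), and I may assume $[\vec w]^{\gamma\vec q}<\infty$. Since the supremum defining $[\vec w]_{FW}^{\vec\beta}$ is taken over cubes, it then suffices to prove $\int_{Q}\cM^{\vec\beta}(\vec w\,1_{Q})\,\dif\mu\lesssim[\vec w]^{\gamma\vec q}\int_{Q}W\,\dif\mu$ for each $Q\in\cD$. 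For $x\in Q$ the nesting of $\cD$ gives $\cM^{\vec\beta}(\vec w\,1_{Q})(x)=\sup\{\prod_{i}(w_{i})_{Q'}^{\beta_{i}}:x\in Q'\in\cD,\ Q'\subseteq Q\}$, because a cube $Q'\ni x$ with $Q'\supseteq Q$ yields $\prod_{i}(w_{i}1_{Q})_{Q'}^{\beta_{i}}=(\meas{Q}/\meas{Q'})\prod_{i}(w_{i})_{Q}^{\beta_{i}}\le\prod_{i}(w_{i})_{Q}^{\beta_{i}}$. (All the averages $(w_{i})_{Q'}$ and integrals $\int_{Q'}\log w_{i}$ appearing below are finite real numbers: finiteness of the averages follows from $[\vec w]^{\gamma\vec q}<\infty$ and a.e.\ positivity of the $w_{i}$ forced by the constraint, and $\int_{Q'}\log w_{i}\le\int_{Q'}w_{i}<\infty$ together with $\log w_{i}=-\sum_{j\ne i}\tfrac{q_{j}}{q_{i}}\log w_{j}$ bounds it below; I would record this briefly but not dwell on it.)

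The heart of the argument is the pointwise estimate, valid for every $Q'\in\cD$,
\[
\prod_{i=1}^{m}(w_{i})_{Q'}^{\beta_{i}}\ \le\ [\vec w]^{\gamma\vec q}\,\exp\!\Big(\frac{1}{\meas{Q'}}\int_{Q'}\log W\,\dif\mu\Big).
\]
To prove it, put $c_{i}:=\gamma q_{i}-\beta_{i}\ge 0$ (nonnegative since $\gamma\ge\beta_{i}/q_{i}$) and factor $\prod_{i}(w_{i})_{Q'}^{\beta_{i}}=\big(\prod_{i}(w_{i})_{Q'}^{\gamma q_{i}}\big)\prod_{i}(w_{i})_{Q'}^{-c_{i}}$; the first factor is $\le\norm{\cM^{\gamma\vec q}(\vec w)}_{\infty}=[\vec w]^{\gamma\vec q}$ by \eqref{eq:def:Muck}. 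For the second, Jensen's inequality gives $(w_{i})_{Q'}\ge\exp\big(\tfrac1{\meas{Q'}}\int_{Q'}\log w_{i}\big)$, so raising to the nonpositive power $-c_{i}$ and multiplying over $i$,
\[
\prod_{i}(w_{i})_{Q'}^{-c_{i}}\ \le\ \exp\!\Big(\frac1{\meas{Q'}}\int_{Q'}\log\prod_{i}w_{i}^{\beta_{i}-\gamma q_{i}}\,\dif\mu\Big);
\]
and now the constraint enters through $\prod_{i}w_{i}^{\beta_{i}-\gamma q_{i}}=\big(\prod_{i}w_{i}^{\beta_{i}}\big)\big(\prod_{i}w_{i}^{q_{i}}\big)^{-\gamma}=W$. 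This is exactly where the exponent $\gamma$ is forced: the arithmetic-versus-geometric-mean defect of the averages is absorbed into $[\vec w]^{\gamma\vec q}$ precisely because the leftover product of weights reduces again to $W$.

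Granting this, taking the supremum over $x\in Q'\subseteq Q$ gives $\cM^{\vec\beta}(\vec w\,1_{Q})\le[\vec w]^{\gamma\vec q}\,\cM_{0}(W\,1_{Q})$ on $Q$, where $\cM_{0}g(x):=\sup_{x\in Q'\in\cD}\exp\big(\tfrac1{\meas{Q'}}\int_{Q'}\log\abs{g}\big)$ is the logarithmic maximal operator and $\cM_{0}(W\,1_{Q})$ is supported in $Q$. Integrating and using $L^{1}$-boundedness of the logarithmic maximal operator, $\norm{\cM_{0}g}_{L^{1}(\mu)}\le e\norm{g}_{L^{1}(\mu)}$ (classical; see \cite[Lemma~2.1]{MR3092729}, and note it holds in the present martingale setting, e.g.\ because $\cM_{0}g\le(M\abs{g}^{s})^{1/s}$ for the dyadic maximal operator $M$ and $\norm{M}_{L^{1/s}\to L^{1/s}}\le(1-s)^{-1}$, letting $s\downarrow 0$), I obtain $\int_{Q}\cM^{\vec\beta}(\vec w\,1_{Q})\,\dif\mu\le e\,[\vec w]^{\gamma\vec q}\int_{Q}W\,\dif\mu$, and taking the supremum over $Q$ and undoing the normalization finishes the proof. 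The one genuinely non-routine point is the choice in the key step: one \emph{cannot} dominate $\cM^{\vec\beta}(\vec w\,1_{Q})$ pointwise by $[\vec w]^{\gamma\vec q}$ times the \emph{ordinary} maximal function of $W$ (that would force $W\in A_{\infty}$ with controlled characteristic, which is false in general), so one is obliged to route through $\cM_{0}$, whose $L^{1}$ bound needs no regularity of $W$; once this is recognized, the rest is Jensen's inequality plus the algebra dictated by $\prod_{i}w_{i}^{q_{i}}=1$.
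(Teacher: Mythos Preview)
Your proof is correct and takes a genuinely different route from the paper's. The paper linearizes $\int_{Q_{0}}\cM^{\vec\beta}(1_{Q_{0}}\vec w)$ via a sparse stopping-time family with disjoint major subsets $E(Q)\subset Q$, and then applies H\"older's inequality on each $E(Q)$---the constraint $\prod_{i}w_{i}^{q_{i}}=1$ supplying exactly the exponent balance needed---to compare $\meas{E(Q)}$ with $\prod_{i} w_{i}(E(Q))^{(\gamma q_{i}-\beta_{i})/\gamma}\big(\int_{E(Q)}W\big)^{1/\gamma}$, from which the factor $[\vec w]^{\gamma\vec q}$ emerges after unwinding. You instead prove the clean pointwise bound $\prod_{i}(w_{i})_{Q'}^{\beta_{i}}\le[\vec w]^{\gamma\vec q}\exp\big(\meas{Q'}^{-1}\int_{Q'}\log W\big)$ by Jensen's inequality (the constraint collapsing $\prod_{i}w_{i}^{\beta_{i}-\gamma q_{i}}$ back to $W$) and then integrate using the $L^{1}$ boundedness of the logarithmic maximal operator $\cM_{0}$. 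Your argument is shorter, gives an explicit constant, and needs no stopping times; the paper's argument is more self-contained and thematically aligned with its sparse machinery. Both hinge on the same algebraic identity $\prod_{i}w_{i}^{\beta_{i}-\gamma q_{i}}=W$, just deployed at different stages---you inside a Jensen estimate, the paper inside a H\"older estimate over $E(Q)$. One small quibble with your closing commentary: the claim that one ``cannot'' dominate $\cM^{\vec\beta}(\vec w\,1_{Q})$ pointwise by $[\vec w]^{\gamma\vec q}M(W\,1_{Q})$ is not literally true---your own Jensen estimate composed with $\exp\big(\meas{Q'}^{-1}\int_{Q'}\log W\big)\le(W)_{Q'}$ gives exactly that domination; the real obstruction, as you in effect say next, is that the Hardy--Littlewood maximal operator is not bounded on $L^{1}$, whereas $\cM_{0}$ is.
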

Lemma~\ref{lem:FW-Ap} is a generalization of the one-weight inequality $[w]_{A_{\infty}} \lesssim_{p} [w]_{A_{p}}$, $1<p<\infty$, which can be recovered with $m=2$, $\vec w=(w,w^{1-p'})$, $\vec\beta=(1,0)$, and $\vec q = (1,p-1)$.
It is proved in Section~\ref{sec:dependent}.

Our first results are weak and strong type estimates for the multilinear fractional maximal function.
Here and later we write $(\beta_{i})_{i\neq j}$ for the vector whose $i$-th entry is $\beta_{i}$ for $i\neq j$ and $0$ for $i=j$.
\begin{theorem}
\label{thm:max}
Let $m\geq 2$, $0<r_{i}<\infty$, $0\leq \rho_{i}<1$, $1<t_{i}<1/\rho_{i}$ for $i=1,\dots,m-1$ and let $\alpha := \sum_{i=1}^{m-1} r_{i}(1/t_{i}-\rho_{i})$.
Then
\begin{equation}
\label{eq:max-weak}
\norm{ \cM^{\vec r}_{\vec\rho}(\widevec{fw}) }_{L^{1/\alpha,\infty}(w_{m})}
\lesssim
[\vec w]^{\vec q} \prod_{i=1}^{m-1} \norm{ f_{i} }_{L^{t_{i}}(w_{i})}^{r_{i}}
\end{equation}
and
\begin{equation}
\label{eq:max-strong}
\norm{ \cM^{\vec r}_{\vec\rho}(\widevec{fw}) }_{L^{1/\alpha}(w_{m})}
\lesssim
[\vec w]^{\vec q} [\vec w]_{FW}^{(r_{i}(1/t_{i}-\rho_{i}))_{i\neq m}}
\prod_{i=1}^{m-1} \norm{ f_{i} }_{L^{t_{i}}(w_{i})}^{r_{i}}.
\end{equation}
where $q_{i} = r_{i}/t_{i}'$ for $i<m$ and $q_{m}=\alpha$.
\end{theorem}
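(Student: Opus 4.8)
The plan is to reduce to a standard sparse-type argument by linearizing the maximal function. First I would fix a compactly supported configuration and choose, for each point $x$, a cube $Q_x \in \cD$ nearly attaining the supremum in \eqref{eq:mult-frac-max}. By the usual selection procedure for martingale maximal functions (pick maximal cubes on which a given threshold is exceeded) one extracts a pairwise-disjoint-at-each-generation family; more precisely, for a fixed large constant $a>1$ one lets $\{Q_j\}$ be the maximal cubes in $\cD$ on which $\prod_{i<m}\big(\meas{Q}^{-(1-\rho_i)}\int_Q f_iw_i\big)^{r_i} > a^k$, indexed also by $k\in\Z$, so that the level sets $\{\cM^{\vec r}_{\vec\rho}(\widevec{fw})>a^k\}$ are covered by the $Q_j$ at level $k$ and the family has bounded overlap in the appropriate sparse sense. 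This is the step where the non-doubling hypothesis must be handled carefully, but it is exactly the standard dyadic stopping-time construction and goes through verbatim in the martingale setting of \cite{arXiv:1501.05818}.

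Next, on each selected cube $Q=Q_j$ I estimate the local average. For each $i<m$, Hölder's inequality with exponents $t_i$ and $t_i'$ gives
\[
\meas{Q}^{-1}\int_Q f_iw_i
\le \Big(\meas{Q}^{-1}\int_Q f_i^{t_i}w_i\Big)^{1/t_i}(w_i)_Q^{1/t_i'},
\]
so that
\[
\prod_{i<m}\Big(\meas{Q}^{-(1-\rho_i)}\int_Q f_iw_i\Big)^{r_i}
\le \meas{Q}^{\alpha}\prod_{i<m}\Big(\meas{Q}^{-1}\int_Q f_i^{t_i}w_i\Big)^{r_i/t_i}\,(w_i)_Q^{r_i/t_i'}.
\]
Multiplying and dividing by $(w_m)_Q^{q_m}=(w_m)_Q^{\alpha}$ and recalling $[\vec w]^{\vec q}=\sup_Q\prod_i (w_i)_Q^{q_i}$, the product of the weight averages over $i<m$ is controlled by $[\vec w]^{\vec q}\,(w_m)_Q^{-\alpha}$. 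Raising to the power $1/\alpha$ and using $\alpha=\sum_{i<m}r_i(1/t_i-\rho_i)$ together with the elementary bound $\prod x_i^{\theta_i}\le\sum\theta_i x_i$ when $\sum\theta_i=1$ (after checking $\sum_{i<m}(r_i/t_i)/\alpha\le 1$, with the remaining mass absorbed by a trivial factor), one arrives at the pointwise/averaged inequality
\[
\meas{Q}^{-1}\int_Q \cM^{\vec r}_{\vec\rho}(\widevec{fw})^{1/\alpha}
\lesssim [\vec w]^{\vec q/\alpha}\,(w_m)_Q^{-1}\sum_{i<m}\lambda_i\Big(\meas{Q}^{-1}\int_Q f_i^{t_i}w_i\Big)^{r_i/(\alpha t_i)}\big(\cdots\big),
\]
whose precise form I will pin down by direct computation.

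For the weak-type bound \eqref{eq:max-weak} this already suffices: summing the measures $w_m(Q_j)$ of the disjoint selected cubes at level $a^k$ against the above estimate and using the geometric decay in $k$ yields the $L^{1/\alpha,\infty}(w_m)$ quasi-norm bound, with the $\prod\norm{f_i}_{L^{t_i}(w_i)}^{r_i}$ factor coming out because the cubes at a fixed level are disjoint and $r_i/t_i$ sums to something $\le 1$ (so no Carleson packing is needed — only disjointness). For the strong-type bound \eqref{eq:max-strong} the extra input is precisely the Fujii--Wilson characteristic: I replace the trivial estimate $\cM^{\vec r}_{\vec\rho}(\widevec{fw})\le a^{k+1}$ on the layer $\{a^k<\cM^{\vec r}_{\vec\rho}\le a^{k+1}\}$ by summing $\int_{Q_j}\cM^{\vec r}_{\vec\rho}(\widevec{fw})^{1/\alpha}$ over the selected cubes, and here $[\vec w]_{FW}^{(r_i(1/t_i-\rho_i))_{i\ne m}}$ enters exactly to convert $\int_{Q_j}\cM^{\vec q/q}(1_{Q_j}\widevec{fw})$-type quantities into $w_m(Q_j)$-weighted sums controlled by the disjointness; combining this with the Carleson embedding theorem for the stopping cubes (whose Carleson constant is $[\vec w]_{FW}$ by definition \eqref{eq:def:FW}) and Hölder's inequality in the sum over $j$ produces the claimed estimate. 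The main obstacle is the bookkeeping in the second step — verifying that the exponents $r_i/(t_i\alpha)$ and $q_i$ are consistent and that the FW characteristic appearing is with the correct exponent tuple $(r_i(1/t_i-\rho_i))_{i\ne m}$ — rather than any deep new idea; once the arithmetic of exponents is correct, the sparse/Carleson machinery is routine.
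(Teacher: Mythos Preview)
Your weak-type argument is essentially the paper's: both select maximal cubes in a level set, apply H\"older on each cube with exponents $t_i,t_i'$, and pull out $[\vec w]^{\vec q}$. The paper is slightly cleaner in that it works at a single threshold $\lambda$ (which is all the weak quasinorm requires) rather than a full dyadic decomposition in $k$, and it uses H\"older over the cubes with exponents $r_i(1/t_i-\rho_i)/\alpha$ followed by $\ell^x\le\ell^1$ instead of AM--GM. Note a slip in your exponent check: $\sum_{i<m}(r_i/t_i)/\alpha = 1 + \alpha^{-1}\sum_{i<m} r_i\rho_i \geq 1$, not $\leq 1$; the weights that actually sum to $1$ are $r_i(1/t_i-\rho_i)/\alpha$.

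For the strong type there is a genuine gap. Your reduction to a sparse sum over stopping cubes is correct and matches the paper (Section~\ref{sec:max-strong}): one obtains
\[
\cM^{\vec r}_{\vec\rho}(\widevec{fw})\lesssim \sum_{Q\in\sparse}\prod_{i<m}\lambda_{i,Q}(w_i)_Q^{s_i}1_{\tilde E(Q)},
\quad \lambda_{i,Q}=\Big(w_i(Q)^{-(1-\rho_i)}\int_Q f_iw_i\Big)^{r_i},\ s_i=(1-\rho_i)r_i,
\]
with disjoint $\tilde E(Q)$. But from here the paper does \emph{not} proceed by ``Carleson embedding with Carleson constant $[\vec w]_{FW}$''; that description is incorrect, since $[\vec w]_{FW}^{\vec\beta}$ is not defined as a Carleson norm of any stopping family. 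Instead the paper invokes \eqref{eq:disjoint-supp}, which rests on Corollary~\ref{cor:char} and, at bottom, on the key Lemma~\ref{lem:key}. That lemma is the real content: it passes through the $L^s$ estimate of Lemma~\ref{lem:cov}, then uses an auxiliary parameter $\epsilon$ to strip off the Muckenhoupt characteristic in two stages so that Lemma~\ref{lem:sum<1} becomes applicable to the inner sum. Only after Lemma~\ref{lem:key} has reduced matters to a sum $\sum_Q\tau_Q\meas{Q}\prod_{i<m}(w_i)_Q^{1/(\alpha p_i)}$ with exponents summing to $1$ does the FW characteristic enter, and it does so directly through its definition~\eqref{eq:def:FW}, not via any Carleson embedding. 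Without this mechanism one does not see how to obtain the product $[\vec w]^{\vec q}[\vec w]_{FW}^{(r_i(1/t_i-\rho_i))_{i\neq m}}$ rather than a cruder power of $[\vec w]^{\vec q}$; your outline treats this step as ``routine bookkeeping'', but it is precisely where the work lies.
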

The weak type estimate \eqref{eq:max-weak} is a straightforward generalization of \cite[Theorem 1]{MR0293384}, see Section~\ref{sec:weak-max} for the proof.
The strong type estimate \eqref{eq:max-strong} has some new aspects even in the non-fractional case $\vec\rho=0$.
In the linear case $m=2$, $\vec\rho=0$, the inequality \eqref{eq:max-strong} reduces to the above-mentioned \cite[Theorem 1.10]{MR3092729} (a different proof appeared in \cite[Theorem 1.3]{MR2990061}).
In the multilinear case the inequality \eqref{eq:max-strong} is a joint strengthening of \cite[Theorem 1.2]{MR3232584} and \cite[Theorem 3.6]{MR3118310} (the latter in turn being a refinement of \cite[Theorem 1.1]{MR3302105}).

\subsection{Carleson sequences and sparse forms}
Let $\cD$ be a dyadic grid.
A map $\tau:\cD\to [0,\infty)$, $Q\mapsto \tau_{Q}$, is called a \emph{Carleson sequence} if
\[
\norm{\tau}_{\Car}:=
\sup_{Q\in\cD} \frac{1}{\meas{Q}} \sum_{Q'\in\cD, Q'\subseteq Q} \tau_{Q'} \meas{Q'} < \infty.
\]
It is customary to consider $\{0,1\}$-valued Carleson sequences, in which case the collection of cubes on which $\tau$ takes the value $1$ is said to be \emph{$\Lambda$-Carleson} with $\Lambda=\norm{\tau}_{\Car}$.
A collection $\sparse$ of cubes if called \emph{$\eta$-sparse} if there exist pairwise disjoint subsets $E(Q)\subset Q$, $Q\in\sparse$, such that $\meas{E(Q)}\geq \eta \meas{Q}$.
A family of cubes is $\Lambda$-Carleson if and only if it is $1/\Lambda$-sparse, see \cite[Lemma 6.3]{arXiv:1508.05639}.
A similar fact holds for Carleson sequences with an identical proof:
\begin{lemma}
\label{lem:carleson-sparse}
Let $\cD$ be a dyadic grid on a non-atomic measure space $X$, $\Lambda>0$, and let $\tau$ be a Carleson sequence with $\norm{\tau}_{\Car}\leq\Lambda$.
Then there exist pairwise disjoint subsets $E(Q)\subset Q$, $Q\in\cD$, such that $\meas{E(Q)}=\tau_{Q}\meas{Q}/\Lambda$.
\end{lemma}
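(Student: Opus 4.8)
The plan is to construct the sets $E(Q)$ by a greedy top-down recursion through the grid, assigning to each cube a portion of its own mass that has not yet been claimed by an ancestor. Since $X$ is non-atomic, every measurable set of positive measure can be split into a measurable piece of any prescribed smaller measure, so the only thing to check is that at each cube there is enough unclaimed mass left. Fix $\Lambda \geq \norm{\tau}_{\Car}$; without loss of generality $\Lambda = \norm{\tau}_{\Car}$ (if the bound is strict, scale down $\tau$, or simply note the construction uses only the hypothesis $\sum_{Q'\subseteq Q}\tau_{Q'}\meas{Q'}\le\Lambda\meas{Q}$). We may also reduce to a single ``tree'' at a time: a dyadic grid is a disjoint union of trees with maximal elements, and the sets $E(Q)$ for cubes in different trees are automatically disjoint, so it suffices to treat one maximal cube $Q_{0}$ and all its descendants.

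The key step is the following invariant, proved by induction on the generations below $Q_{0}$. For a cube $Q\subseteq Q_{0}$ write $F(Q) := Q \setminus \bigcup_{Q \subsetneq Q' \subseteq Q_{0}} E(Q')$ for the part of $Q$ not yet used by strict ancestors. I claim one can choose the $E(Q)$ so that $E(Q)\subset F(Q)$, the $E(Q)$ are pairwise disjoint, $\meas{E(Q)} = \tau_{Q}\meas{Q}/\Lambda$, and moreover
\[
\meas{F(Q)} \;\geq\; \frac{1}{\Lambda}\sum_{Q' \subseteq Q} \tau_{Q'}\meas{Q'}.
\]
Granting the inequality for $Q$, the Carleson bound $\sum_{Q'\subseteq Q}\tau_{Q'}\meas{Q'}\le\Lambda\meas{Q}$ is not even needed at this stage; what we need is that $\meas{F(Q)}\ge \tau_{Q}\meas{Q}/\Lambda$, which follows since $\tau_{Q}\meas{Q}$ is one of the (non-negative) terms in the sum on the right. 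Hence by non-atomicity we may pick $E(Q)\subset F(Q)$ with $\meas{E(Q)}=\tau_{Q}\meas{Q}/\Lambda$ exactly. Disjointness of $E(Q)$ from all $E(Q')$ with $Q'$ an ancestor is built into $E(Q)\subset F(Q)$; disjointness from $E(Q')$ for incomparable $Q'$ is automatic since then $Q\cap Q'=\emptyset$; and disjointness from descendants' sets is ensured because those are chosen at later stages inside their own $F$, which by definition avoids $E(Q)$.

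It remains to propagate the displayed inequality downward. If $Q_{1},Q_{2},\dots$ are the children of $Q$ in the grid (the maximal cubes strictly contained in $Q$), they are pairwise disjoint subsets of $Q$, and each $F(Q_{j}) = F(Q) \setminus \big(E(Q)\cup\bigcup_{k\neq j,\ Q_{k}\text{ sibling}}(\dots)\big)$; more precisely $F(Q_{j}) = Q_{j}\cap\big(F(Q)\setminus E(Q)\big)$ because the only ancestors of $Q_{j}$ strictly between it and $Q_{0}$ are $Q$ together with the ancestors of $Q$. Summing over $j$ and using disjointness of the $Q_{j}$ inside $Q$,
\[
\sum_{j}\meas{F(Q_{j})}
= \meas{\big(F(Q)\setminus E(Q)\big)\cap \textstyle\bigcup_{j}Q_{j}}
\geq \meas{F(Q)} - \meas{E(Q)} - \big(\meas{Q}-\textstyle\sum_j\meas{Q_j}\big)_{+}.
\]
When $\bigcup_j Q_j$ exhausts $Q$ (the usual case for a filtration) the last term vanishes; in general one runs the argument with the leftover ``atom-free remainder'' treated as a childless piece carrying no $\tau$-mass. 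Using the inductive bound for $\meas{F(Q)}$, the identity $\meas{E(Q)}=\tau_Q\meas{Q}/\Lambda$, and $\sum_j\sum_{Q'\subseteq Q_j}\tau_{Q'}\meas{Q'} = \sum_{Q'\subseteq Q}\tau_{Q'}\meas{Q'}-\tau_Q\meas{Q}$, one gets $\sum_j\meas{F(Q_j)}\ge \frac1\Lambda\sum_j\sum_{Q'\subseteq Q_j}\tau_{Q'}\meas{Q'}$. To pass from this sum over children to the individual inequality for each child, observe that the construction is local to each subtree: we may simply \emph{define} the recursion so that the mass available to the subtree rooted at $Q_j$ is exactly $F(Q_j)$, and run the same argument there; since at the top of that subtree we only ever needed $\meas{F(Q_j)}\ge \tau_{Q_j}\meas{Q_j}/\Lambda$, which is implied by the per-subtree Carleson bound inherited from $\norm{\tau}_{\Car}\le\Lambda$, the induction closes.

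\textbf{Main obstacle.} The only genuine subtlety is bookkeeping of the mass across generations and, in the non-atomic/non-doubling generality of the paper, the possibility that the children $Q_j$ of a cube $Q$ do not cover $Q$ up to a null set. This is handled exactly as in \cite[Lemma 6.3]{arXiv:1508.05639}: treat the (measurable) set $Q\setminus\bigcup_j Q_j$ as an extra childless leaf, which neither demands nor supplies any $\tau$-mass, so it only helps the inequality. Everything else is a routine transfinite/inductive application of non-atomicity, and the reader is referred to the cited lemma for the details, the present statement being the verbatim analogue for real-valued rather than $\{0,1\}$-valued sequences.
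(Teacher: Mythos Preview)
Your top-down construction has a genuine gap at the propagation step. After choosing $E(Q)\subset F(Q)$ with $\meas{E(Q)}=\tau_{Q}\meas{Q}/\Lambda$, you only control the \emph{sum} $\sum_{j}\meas{F(Q_{j})}$, not the individual $\meas{F(Q_{j})}$. The sentence ``at the top of that subtree we only ever needed $\meas{F(Q_{j})}\geq \tau_{Q_{j}}\meas{Q_{j}}/\Lambda$, which is implied by the per-subtree Carleson bound'' is incorrect: the Carleson bound on the subtree rooted at $Q_{j}$ says nothing about $\meas{F(Q_{j})}$, which depends on where you placed $E(Q)$ for the ancestors. Concretely, take $Q$ with two children $Q_{1},Q_{2}$ of equal measure $\meas{Q}/2$, set $\tau_{Q}=\Lambda/2$, $\tau_{Q_{1}}=\Lambda$, and all other $\tau_{Q'}=0$; then $\norm{\tau}_{\Car}=\Lambda$ and the required $\meas{E(Q)}=\meas{Q}/2$. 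Your recipe allows $E(Q)=Q_{1}$, after which $F(Q_{1})=\emptyset$ and there is no room for $E(Q_{1})$, which must have measure $\meas{Q_{1}}=\meas{Q}/2$. The top-down scheme can be salvaged, but only by specifying at each step how $E(Q)$ is distributed among the children so that each $Q_{j}$ retains at least $\Lambda^{-1}\sum_{Q'\subseteq Q_{j}}\tau_{Q'}\meas{Q'}$ of free mass; you never do this.

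The paper avoids this bookkeeping entirely by going in the opposite direction: it inducts on the cardinality of $\cD$, removes a \emph{maximal} cube $Q$, applies the inductive hypothesis to $\cD\setminus\{Q\}$ to obtain all $E(Q')$ with $Q'\subsetneq Q$ first, and only then places $E(Q)$ in the leftover $Q\setminus\bigcup_{Q'\subsetneq Q}E(Q')$. The Carleson condition at $Q$ gives $\sum_{Q'\subsetneq Q}\meas{E(Q')}\leq (1-\tau_{Q}/\Lambda)\meas{Q}$ directly, so there is always enough room with no need to control where the descendants' sets sit inside $Q$. That reversal of order is precisely the missing idea in your argument.
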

Since in our applications it suffices to consider finite dyadic grids and the proof of \cite[Lemma 6.3]{arXiv:1508.05639} is particularly easy in this case, we include it in Section~\ref{sec:key} for completeness.
Throughout the article we consider Carleson sequences of the form
\[
\tau_{Q} = \one_{\sparse}(Q),
\quad
\sparse \text{ sparse},
\]
with a uniform bound on the sparsity/Carleson constants.
The restriction to Carleson sequences coming from sparse sets is needed in the proof of Lemma~\ref{lem:key}.

We consider the following sparse forms associated to a Carleson sequence $\tau$.
Let $m\geq 2$, $0<r_{i}<\infty$, and $0\leq \rho_{i}<1$ for $1\leq i\leq m$.
Define the multi(sub)linear sparse operator by
\[
\sop_{\vec\rho}^{\vec r}(\tau,\vec f)
:=
\sum_{Q\in\cD} \tau_{Q} \prod_{i=1}^{m-1} \Big(\meas{Q}^{-(1-\rho_{i})} \int_{Q} f_{i} \Big)^{r_{i}} 1_{Q}
\]
and the multi(sub)linear sparse form by
\[
\sform_{\vec\rho}^{\vec r}(\tau,\vec f)
:=
\sum_{Q\in\cD} \tau_{Q} \meas{Q} \prod_{i=1}^{m} \Big(\meas{Q}^{-(1-\rho_{i})} \int_{Q} f_{i} \Big)^{r_{i}}.
\]
We apply to $\sop_{\vec\rho}^{\vec r}$ and $\sform_{\vec\rho}^{\vec r}$ the same index omission conventions as to $\cM_{\vec\rho}^{\vec r}$.
The sparse operators $\sop$ are known to dominate many multilinear singular integral operators, see \cite{MR3232584}, \cite{MR3850284}, \cite{arXiv:1604.05506}.
The sparse operators $\sop_{\vec\rho}$ with $\rho_{i}\neq 0$ dominate multilinear fractional integrals, see \cite{MR3483175}.
Finally, the sparse forms $\sform^{\vec r}$ with $r_{i}\neq 1$ turn out to dominate some singular integral operators with weak kernel regularity \cite{MR3802254} and even operators that fall outside the scope of Calder\'on--Zygmund theory \cite{MR3531367}, \cite{arXiv:1603.05317}, \cite{MR3653057}.

In the duality range we have the following estimate for the sparse forms $\sform_{\vec \rho}^{\vec r}$.
\begin{theorem}
\label{thm:fractional-dualized}
Let $m\geq 2$, $0<r_{i}<\infty$, $0\leq \rho_{i}<1$, $1<t_{i}<1/\rho_{i}$ and suppose
\[
\sum_{i=1}^{m} r_{i} (1/t_{i} - \rho_{i}) = 1.
\]
Let $\{1,\dots,m\}=J_{s}\cup J_{r}$ be a partition with $J_{s}$ non-empty.
Then
\[
\sform_{\vec\rho}^{\vec r}(\tau,\widevec{fw})
\lesssim
[\vec w]^{(r_{i}/t_{i}')_{i}} \sum_{j\in J_{s}} [\vec w]_{FW}^{(r_{i}(1/t_{i}-\rho_{i}))_{i\neq j}}
\prod_{i\in J_{s}} \norm{ f_{i} }_{L^{t_{i}}(w_{i})}^{r_{i}}
\prod_{i\in J_{r}} \norm{ f_{i} }_{L^{t_{i},r_{i}}(w_{i})}^{r_{i}}.
\]
\end{theorem}
In the case $m=2$ this recovers \cite[Theorem 1.6]{MR3591468} and both weak and strong type estimates in \cite{MR3778152} in the case $p>r$.
With $r_{i}=1$ and $J_{r}=\emptyset$ this recovers the case $p>\gamma$ of \cite[Theorem 4.2]{MR3850284}.
Further results in the off-diagonal case (not assuming the H\"older type condition on the exponents) appear in \cite{MR3761937}.

In the case of dependent weights $\prod_{i=1}^{m} w_{i}^{q_{i}}=1$ (that specializes to one weight inequalities in the ``linear'' case $m=2$), strong type bounds ($J_{r}=\emptyset$) have been obtained in \cite[Theorem 1.2]{MR3232584} for the form $\sform$ and in \cite[\textsection 16]{arXiv:1508.05639} for the form $\sform^{\vec r}$.
The dependence on the characteristic of the weights in these works takes the form
\[
[\vec w]^{\gamma (r_{i}/t_{i}')_{i}},
\qquad
\gamma = \max_{i} t_{i}'.
\]
These results can be recovered from Theorem~\ref{thm:fractional-dualized} using Lemma~\ref{lem:FW-Ap} and the fact that $\frac{r_{i}}{t_{i}} \frac{t_{i}'}{r_{i}} = t_{i}'-1$.

Outside of the duality range we have the following result, which extends the remaining cases of \cite[Theorem 1.1]{MR3778152} and \cite[Theorem 4.2]{MR3850284}.
\begin{theorem}
\label{thm:fractional-below-L1}
Let $m\geq 2$, $0<r_{i}<\infty$, $0\leq \rho_{i}<1$, $1<t_{i}<1/\rho_{i}$ and suppose
\[
\alpha := \sum_{i=1}^{m-1} r_{i} (1/t_{i} - \rho_{i}) \geq 1.
\]
Then
\[
\norm{ \sop_{\vec\rho}^{\vec r}(\tau,\widevec{fw}) }_{L^{1/\alpha}(w_{m})}
\lesssim
[\vec w]^{\vec q} [\vec w]_{FW}^{(r_{i}(1/t_{i}-\rho_{i}))_{i\neq m}}
\prod_{i=1}^{m-1} \norm{ f_{i} }_{L^{t_{i}}(w_{i})}^{r_{i}}.
\]
where $q_{i} = r_{i}/t_{i}'$ for $i<m$ and $q_{m}=\alpha$.
\end{theorem}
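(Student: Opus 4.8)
The plan is to estimate $\int_X \sop_{\vec\rho}^{\vec r}(\tau,\widevec{fw})^{1/\alpha}\,\dif(w_m\mu)$ directly, the key structural feature of the regime $\alpha\ge 1$ being that the output exponent $p:=1/\alpha$ lies in $(0,1]$, so that the quasi-triangle inequality $\bignorm{\sum_k g_k}_{L^p}^p\le\sum_k\norm{g_k}_{L^p}^p$ is available. By monotone convergence it suffices to treat a finite dyadic grid; by homogeneity we normalise $\norm{\tau}_{\Car}=1$ and $\norm{f_i}_{L^{t_i}(w_i)}=1$ for $i<m$, and we may assume $[\vec w]^{\vec q}$ and $[\vec w]_{FW}^{(r_i(1/t_i-\rho_i))_{i\neq m}}$ finite. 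By Lemma~\ref{lem:carleson-sparse} (with $\Lambda=1$) we fix pairwise disjoint $E(Q)\subset Q$ with $\meas{E(Q)}=\tau_Q\meas Q$, so that $\sum_{Q'\subseteq Q,\,Q'\in\cD}\tau_{Q'}\meas{Q'}\le\meas Q$ for every $Q$.

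The argument I would run is parallel to the proof of the duality-range estimate of Theorem~\ref{thm:fractional-dualized} (and of the strong bound \eqref{eq:max-strong}), but with the roles of the $\ell^p$-sum over cubes and of the $L^p$-quasi-norm interchanged. First I would perform a stopping-time/corona decomposition adapted to $f_1w_1,\dots,f_{m-1}w_{m-1}$: since $t_i>1$ and $\norm{f_i}_{L^{t_i}(w_i)}=1$, selecting the stopping cubes where one of the averages $\langle f_iw_i\rangle$ has increased by a large fixed factor produces a sparse family $\cF$ of top cubes on whose coronas $\mathcal C_P$ every $\langle f_iw_i\rangle_Q$, $Q\in\mathcal C_P$, is comparable to its value at $P$; moreover $\tau$ restricted to each corona is still Carleson with norm $\le 1$. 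Writing $\sigma:=\sum_{i<m}r_i\rho_i$ and $a_P:=\prod_{i<m}\langle f_iw_i\rangle_P^{r_i}$, this yields the pointwise bound $\sop_{\vec\rho}^{\vec r}(\tau,\widevec{fw})\lesssim\sum_{P\in\cF}a_P\bigl(\sum_{Q\in\mathcal C_P}\tau_Q\meas Q^{\sigma}1_Q\bigr)$, and the quasi-triangle inequality over $P\in\cF$ then reduces everything to bounding, for each $P$, the local quantity $a_P^{p}\int_P\bigl(\sum_{Q\in\mathcal C_P}\tau_Q\meas Q^{\sigma}1_Q\bigr)^p w_m$ and summing it over the sparse family $\cF$.

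For the local quantity I would use, when $\sigma>0$, the dyadic decay $\meas Q^{\sigma}\le 2^{-\sigma}\meas{Q''}^{\sigma}$ for $Q\subsetneq Q''$ together with $\tau_Q\le 1$ to get $\sum_{Q\in\mathcal C_P,\,Q\ni x}\tau_Q(\meas Q/\meas P)^{\sigma}\lesssim_\sigma 1$, hence an estimate for $\int_P(\cdots)^pw_m$ in terms of $w_m$ on cubes of $\mathcal C_P$; in the non-fractional case $\sigma=0$ one instead uses Hölder's inequality $\int_P h^p\,\dif(w_m\mu)\le\bigl(\int_P h\,\dif(w_m\mu)\bigr)^p w_m(P)^{1-p}$ and a Carleson embedding for $\tau$ against $w_m\,\dif\mu$ on the corona. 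The size of $w_m$ over cubes of $\mathcal C_P$ must, however, be controlled not through a one-weight $A_\infty$ condition on $w_m$ (unavailable here) but jointly, using $[\vec w]^{\vec q}$ to trade $(w_m)_Q$ against powers of $(w_i)_Q$, $i<m$, together with the Fujii--Wilson inequality \eqref{eq:def:FW}, which at the appropriate point converts an integral of a multilinear maximal function of $(w_1,\dots,w_{m-1})$ into an integral of the pointwise product $\prod_{i<m}w_i^{r_i(1/t_i-\rho_i)/\alpha}$ — this is the $A_\infty$-type input that keeps the quasi-triangle step from being lossy. It then remains to sum over $\cF$: applying Hölder $\langle f_iw_i\rangle_P\le\langle f_i^{t_i}w_i\rangle_P^{1/t_i}(w_i)_P^{1/t_i'}$ on each top extracts the factors $(w_i)_P^{r_ip/t_i'}=(w_i)_P^{q_ip}$ (so that $[\vec w]^{\vec q}$ enters via \eqref{eq:def:Muck}), the surplus $\meas P^{\sigma p}$ is absorbed by rewriting $\meas P^{\rho_ir_ip}\langle f_i^{t_i}w_i\rangle_P^{r_ip/t_i}$ as a fractional average with parameter $\rho_it_i<1$, and one is left with a sparse sum of a fractional multilinear maximal function of $(f_1^{t_1}w_1,\dots,f_{m-1}^{t_{m-1}}w_{m-1})$ at the endpoint scaling $\sum_{i<m}\tfrac{r_ip}{t_i}(1-\rho_it_i)=1$; by sparseness of $\cF$ and the endpoint of Theorem~\ref{thm:max} (or Theorem~\ref{thm:fractional-dualized}) with unit weights this is $\lesssim\prod_{i<m}\bigl(\int_X f_i^{t_i}w_i\bigr)^{r_ip/t_i}=1$. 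Taking $\alpha$-th powers and undoing the normalisation gives the claim with $q_i=r_i/t_i'$, $q_m=\alpha$.

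The step I expect to be the main obstacle is the per-corona estimate together with the bookkeeping that keeps it sharp: the quasi-Banach ($p\le 1$) summation is lossless only if the stopping rule, the Carleson embedding of $\tau$ against $w_m\,\dif\mu$ and the use of \eqref{eq:def:FW} are coordinated so that precisely the \emph{new}, smaller, Fujii--Wilson characteristic is produced — rather than one of the larger $A_\infty$-type characteristics — and so that the residual maximal function of the $f_i^{t_i}w_i$ lands at the exact exponent $1$ with an $L^1$ (not merely Lorentz) bound on the right-hand side. Running the fractional ($\sigma>0$) and non-fractional ($\sigma=0$) regimes through a single argument, and verifying that all exponents close, are the remaining technical points.
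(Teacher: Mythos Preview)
Your high-level architecture --- a stopping-time decomposition followed by the subadditivity of $x\mapsto x^{1/\alpha}$ when $\alpha\ge 1$ --- is exactly what the paper does via \eqref{eq:concave-range}. The gap is in \emph{what} you stop on. You freeze the unweighted averages $\langle f_iw_i\rangle_Q$, which leaves the per-corona sum $\sum_{Q\in\mathcal C_P}\tau_Q\meas{Q}^\sigma 1_Q$ with no factors $(w_i)_Q$, $i<m$, inside it --- but these are precisely what one needs in order to invoke either $[\vec w]^{\vec q}$ or the FW characteristic. Your $\sigma>0$ branch makes the failure concrete: the pointwise bound collapses the per-corona quantity to $a_P^{1/\alpha}\meas P^{\sigma/\alpha}w_m(P)$, the FW characteristic is never produced, and after H\"older and $[\vec w]^{\vec q}$ you are left with the sparse sum $\sum_P\meas P\prod_{i<m}\bigl(\meas P^{-(1-\rho_it_i)}\int_Pf_i^{t_i}w_i\bigr)^{r_i/(t_i\alpha)}$ sitting at the $L^1$ endpoint of the multilinear fractional maximal function, where only weak type holds (already for one function and $\rho=0$: with $g=1_{[0,1]}$ and the $\tfrac12$-sparse stopping family $\cF=\{[0,2^k]\}_{0\le k\le N}$ one has $\sum_P\meas P\langle g\rangle_P=N+1$). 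The $\sigma=0$ branch is no better: the Carleson embedding of $\tau$ against $w_m\dif\mu$ is exactly the unavailable $A_\infty$ condition on $w_m$, and trading $(w_m)_Q$ for $(w_i)_Q$ via $[\vec w]^{\vec q}$ gives negative exponents that cannot be summed.

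The paper instead stops on the $w_i$-\emph{weighted} fractional averages $\lambda_{i,Q}=\bigl(w_i(Q)^{-(1-\rho_i)}\int_Qf_iw_i\bigr)^{r_i}$, so that the identity $\bigl(\meas Q^{-(1-\rho_i)}\int_Qf_iw_i\bigr)^{r_i}=\lambda_{i,Q}\,(w_i)_Q^{s_i}$ with $s_i=(1-\rho_i)r_i$ separates function from weight and leaves, after the quasi-triangle step, the purely weight-dependent integral $\int\bigl(\sum_Q\tau_Q\prod_{i<m}(w_i)_Q^{s_i}1_Q\bigr)^{1/\alpha}\dif w_m$. That is the object Lemma~\ref{lem:key} (via Lemma~\ref{lem:cov} and Lemma~\ref{lem:sum<1}) bounds by $[\vec w]^{\vec q}\sum_Q\tau_Q\meas Q\prod_{i<m}(w_i)_Q^{s_i-q_i}$; the remaining exponents are now positive with sum $1$, so the FW definition applies directly (this is Corollary~\ref{cor:char}). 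The outer sum over stopping cubes then reduces to $\prod_{i<m}\norm{M\lambda_i}_{L^{\tilde p_i}(w_i)}\lesssim\prod_{i<m}\norm{f_i}_{L^{t_i}(w_i)}^{r_i}$ by Proposition~\ref{prop:frac-max}, a genuine strong-type bound because $t_i>1$. No case split on $\sigma$ is needed, and the argument is valid in general dyadic grids without the doubling used in your $\sigma>0$ step.
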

The case of (sub-)linear operators $m=2$ treated in \cite[Theorem 1.2]{MR3778152} suggests that the dependence on the FW characteristic in the corresponding weak type estimate can be removed, analogously to the estimate for the multilinear maximal operator.
We have been unable to decide whether this is indeed possible.

\subsection*{Acknowledgment}
I thank Bas Nieraeth for pointing out an error in a previous version of this article.

\section{Deduction of weighted estimates}
\subsection{Preliminaries}
Lebesgue space bounds for the martingale fractional maximal function can be found in \cite[Theorem 2.3]{MR3000426}.
By real interpolation they imply the following Lorentz space bounds.
\begin{proposition}
\label{prop:frac-max}
Let $0\leq\rho<1$, $1<p<1/\rho$, and let $1/q = 1/p - \rho$.
Then for every $1\leq r\leq\infty$, every dyadic grid $\cD$, and every measure $\nu$ we have
\[
\norm{M_{\rho,\nu} f}_{L^{q,r}(d\nu)} \lesssim_{\rho,p,r} \norm{f}_{L^{p,r}(d\nu)},
\]
and the implicit constant does not depend on the measure $\nu$ and the dyadic grid $\cD$.
\end{proposition}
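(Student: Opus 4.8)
The plan is to obtain the Lorentz space bounds by real (Marcinkiewicz-type) interpolation of the Lebesgue space bounds of \cite[Theorem 2.3]{MR3000426}. First I would fix $\rho$ and $p$ as in the statement and choose auxiliary exponents $1<p_{0}<p<p_{1}<1/\rho$, setting $1/q_{i}:=1/p_{i}-\rho$, so that $1\le p_{i}<q_{i}<\infty$ and $q_{0}\neq q_{1}$. Picking $\theta\in(0,1)$ with $1/p=(1-\theta)/p_{0}+\theta/p_{1}$ and subtracting $\rho$ from both sides, one sees that simultaneously $1/q=(1-\theta)/q_{0}+\theta/q_{1}$; this is the point at which the affine relation $1/q=1/p-\rho$ enters, and it is precisely what makes the interpolated exponents match up. By \cite[Theorem 2.3]{MR3000426} the operator $M_{\rho,\nu}$ is bounded $L^{p_{i}}(\dif\nu)\to L^{q_{i}}(\dif\nu)$ for $i=0,1$ with constants $C_{i}$ independent of $\nu$ and of $\cD$.

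Next I would record that $M_{\rho,\nu}$ is positive and subadditive on non-negative functions, i.e.\ $M_{\rho,\nu}(f+g)\le M_{\rho,\nu}f+M_{\rho,\nu}g$ pointwise. Given this, for any splitting $f=f_{0}+f_{1}$ of a non-negative $f$ into non-negative pieces one has $M_{\rho,\nu}f\le M_{\rho,\nu}f_{0}+M_{\rho,\nu}f_{1}$, hence $M_{\rho,\nu}f=h_{0}+h_{1}$ with $h_{0}:=\min(M_{\rho,\nu}f,M_{\rho,\nu}f_{0})$ and $h_{1}:=(M_{\rho,\nu}f-M_{\rho,\nu}f_{0})^{+}$ satisfying $0\le h_{j}\le M_{\rho,\nu}f_{j}$. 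This yields, for the $K$-functional of the couple $(L^{q_{0}}(\dif\nu),L^{q_{1}}(\dif\nu))$,
\[
K(t,M_{\rho,\nu}f)\le\norm{h_{0}}_{L^{q_{0}}(\dif\nu)}+t\norm{h_{1}}_{L^{q_{1}}(\dif\nu)}\le C_{0}\norm{f_{0}}_{L^{p_{0}}(\dif\nu)}+C_{1}t\norm{f_{1}}_{L^{p_{1}}(\dif\nu)},
\]
and taking the infimum over all such splittings gives $K(t,M_{\rho,\nu}f)\le\max(C_{0},C_{1})K(t,f)$ for the couple $(L^{p_{0}}(\dif\nu),L^{p_{1}}(\dif\nu))$. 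Multiplying by $t^{-\theta}$ and taking $L^{r}(\dif t/t)$ norms then gives boundedness of $M_{\rho,\nu}$ from $(L^{p_{0}},L^{p_{1}})_{\theta,r}$ into $(L^{q_{0}},L^{q_{1}})_{\theta,r}$, for every $0<r\le\infty$, with constant $\max(C_{0},C_{1})$.

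Finally I would invoke the classical identification of real interpolation spaces between Lebesgue spaces, $(L^{p_{0}}(\dif\nu),L^{p_{1}}(\dif\nu))_{\theta,r}=L^{p,r}(\dif\nu)$ and likewise for the $q$-exponents, whose norm-equivalence constants depend only on $p_{0},p_{1},q_{0},q_{1},\theta,r$ --- hence, after the above choice of auxiliary exponents, only on $\rho,p,r$ --- and in particular are independent of $\nu$ and $\cD$. Combining the three steps proves the asserted estimate. I expect the only genuinely non-routine point to be that $M_{\rho,\nu}$ is merely subadditive rather than linear, so that one cannot feed it directly into an interpolation functor; the positivity splitting above is what circumvents this, and alternatively one could simply quote the Marcinkiewicz interpolation theorem for subadditive operators. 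Everything else is the standard dictionary between real interpolation and Lorentz spaces, together with the bookkeeping showing that all constants are independent of $\nu$ and $\cD$.
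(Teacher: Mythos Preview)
Your proposal is correct and follows exactly the route the paper indicates: the paper's entire argument is the single sentence ``Lebesgue space bounds for the martingale fractional maximal function can be found in \cite[Theorem 2.3]{MR3000426}. By real interpolation they imply the following Lorentz space bounds,'' and you have carefully unpacked precisely this real-interpolation step, including the subadditivity workaround and the bookkeeping that the interpolation constants do not depend on $\nu$ or $\cD$.
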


Next, we include the proof of equivalence between Carleson and sparseness conditions in the special case of finite dyadic grids, cf.\ \cite[\textsection 6]{arXiv:1508.05639}.
\begin{proof}[Proof of Lemma~\ref{lem:carleson-sparse} for finite dyadic grids]
We induct on the cardinality of $\cD$.
If $\cD$ is empty, there is nothing to prove.

Let now $\cD$ be given and suppose that the result is known for all dyadic grids of smaller cardinality.
Let $Q\in\cD$ be a maximal element with respect to inclusion, then the restriction of $\tau$ to $\cD':=\cD\setminus\{Q\}$ also has Carleson norm $\leq \Lambda$.
By the inductive hypothesis there exist disjoint sets $E(Q')\subset Q'$, $Q'\in\cD'$, satisfying the conclusion of the lemma.
By the Carleson measure hypothesis we have
\[
\sum_{Q'\subsetneq Q} \meas{E(Q')}
=
\Lambda^{-1} \sum_{Q'\subsetneq Q} \tau_{Q'}\meas{Q'}
\leq
\Lambda^{-1} (\norm{\tau}_{\Car}-\tau_{Q})\meas{Q}
\leq
(1-\tau_{Q}/\Lambda)\meas{Q}.
\]
Therefore, since $X$ is non-atomic, there exists a subset $E(Q)\subset Q$ with measure $\tau_{Q}\meas{Q}/\Lambda$ that is disjoint from all $E(Q')$'s.
\end{proof}

We will use the following result about $L^{s}$ norms that is part of \cite[Proposition 2.2]{MR2086703}.
\begin{lemma}
\label{lem:cov}
For every $1<s<\infty$ there exists $C_{s}>0$ such that for every positive locally finite measure $\sigma$ on $X$ and any positive numbers $\lambda_{Q}$, $Q\in\cD$, we have
\[
\int \Big( \sum_{Q\in\cD} \frac{\lambda_{Q}}{\sigma(Q)} 1_{Q}(x) \Big)^{s} \dif\sigma(x)
\lesssim_{s}
\sum_{Q\in\cD} \lambda_{Q} \Big( \sigma(Q)^{-1} \sum_{Q'\subseteq Q} \lambda_{Q'} \Big)^{s-1}.
\]
\end{lemma}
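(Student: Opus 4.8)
The plan is to reduce the estimate to a pointwise bound for $F:=\sum_{Q\in\cD}\tfrac{\lambda_{Q}}{\sigma(Q)}1_{Q}$ and then integrate. First I would pass to a finite grid: by monotone convergence it suffices to prove the inequality with each finite subcollection of $\cD$ in place of $\cD$, since enlarging the grid only increases every average $\sigma(Q)^{-1}\sum_{Q'\subseteq Q}\lambda_{Q'}$; on a finite grid all sums converge and the cubes containing a given point form a finite chain. Write $b_{Q}:=\lambda_{Q}/\sigma(Q)$, $\Lambda_{Q}:=\sum_{Q'\subseteq Q}\lambda_{Q'}$, $a_{Q}:=\Lambda_{Q}/\sigma(Q)$, $g_{Q}:=\sum_{Q'\supseteq Q}b_{Q'}$, and $F_{Q}:=\sum_{Q'\subseteq Q}b_{Q'}1_{Q'}$. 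Then $F_{Q}$ vanishes outside $Q$ and $\sigma(Q)^{-1}\int_{Q}F_{Q}\,\dif\sigma=a_{Q}$, the function $g_{Q}$ is constant on $Q$, and the right-hand side of the lemma equals $\sum_{Q}\lambda_{Q}a_{Q}^{s-1}$.

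The pointwise input is the elementary inequality that for $c_{1},\dots,c_{N}\ge 0$, $s\ge 1$, and $S_{k}:=c_{1}+\dots+c_{k}$,
\[
\Bigl(\sum_{k}c_{k}\Bigr)^{s}=\sum_{k}\bigl(S_{k}^{s}-S_{k-1}^{s}\bigr)=\sum_{k}\int_{S_{k-1}}^{S_{k}}s\,t^{s-1}\,\dif t\le s\sum_{k}S_{k}^{s-1}c_{k},
\]
valid since $t\mapsto t^{s-1}$ is nondecreasing. I would apply this for a fixed $x$, enumerating the cubes of $\cD$ containing $x$ by size. Ordering them from smallest to largest with $c_{k}=b_{Q^{k}}$ gives $S_{k}=\sum_{Q'\ni x,\,Q'\subseteq Q^{k}}b_{Q'}=F_{Q^{k}}(x)$, hence $F(x)^{s}\le s\sum_{Q\ni x}b_{Q}F_{Q}(x)^{s-1}$; integrating against $\sigma$ yields $\int F^{s}\,\dif\sigma\le s\sum_{Q}b_{Q}\int_{Q}F_{Q}^{s-1}\,\dif\sigma$. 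Ordering them from largest to smallest gives $S_{k}=g_{Q^{k}}$, hence $F(x)^{s}\le s\sum_{Q\ni x}b_{Q}g_{Q}^{s-1}$; integrating (with $g_{Q}$ constant on $Q$) yields $\int F^{s}\,\dif\sigma\le s\sum_{Q}\lambda_{Q}g_{Q}^{s-1}$.

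If $1<s\le 2$ the first of these finishes the proof at once: $t\mapsto t^{s-1}$ is concave, so by Jensen $\sigma(Q)^{-1}\int_{Q}F_{Q}^{s-1}\,\dif\sigma\le a_{Q}^{s-1}$ and therefore $\int F^{s}\,\dif\sigma\le s\sum_{Q}b_{Q}\sigma(Q)a_{Q}^{s-1}=s\sum_{Q}\lambda_{Q}a_{Q}^{s-1}$. For $s\ge 2$ I would use the second bound, reducing the lemma to the comparison $\sum_{Q}\lambda_{Q}g_{Q}^{s-1}\lesssim_{s}\sum_{Q}\lambda_{Q}a_{Q}^{s-1}$; this is the crux, because $g_{Q}$ (a sum over ancestors) can be pointwise far larger than $a_{Q}$ (an average over descendants), so no termwise estimate is available. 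To prove it I would telescope $g_{Q}^{s-1}$ along the chain of ancestors-or-self $R\supseteq Q$: since $g_{R}$ increases as $R$ shrinks, with consecutive jumps equal to $b_{R}$, the mean value theorem (here $s-1\ge 1$ is used) gives $g_{Q}^{s-1}\le(s-1)\sum_{R\supseteq Q}g_{R}^{s-2}b_{R}$. Summing over $Q$ and interchanging summations, the inner weight collapses — $\sum_{Q\subseteq R}\lambda_{Q}=\Lambda_{R}$ and $b_{R}\Lambda_{R}=\lambda_{R}a_{R}$ — so that $\sum_{Q}\lambda_{Q}g_{Q}^{s-1}\le(s-1)\sum_{R}\lambda_{R}a_{R}g_{R}^{s-2}$.

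It then remains to absorb the $g$-term: applying Young's inequality to $a_{R}g_{R}^{s-2}$ with conjugate exponents $s-1$ and $\tfrac{s-1}{s-2}$ and a free parameter $\tau>0$ gives $a_{R}g_{R}^{s-2}\le\tfrac{\tau^{s-1}}{s-1}a_{R}^{s-1}+\tfrac{s-2}{s-1}\tau^{-(s-1)/(s-2)}g_{R}^{s-1}$, whence $\sum_{Q}\lambda_{Q}g_{Q}^{s-1}\le\tau^{s-1}\sum_{Q}\lambda_{Q}a_{Q}^{s-1}+(s-2)\tau^{-(s-1)/(s-2)}\sum_{Q}\lambda_{Q}g_{Q}^{s-1}$; choosing $\tau$ large enough that the last coefficient is $\le\tfrac12$ and absorbing (legitimate because on a finite grid $\sum_{Q}\lambda_{Q}g_{Q}^{s-1}<\infty$) gives the comparison, while the borderline case $s=2$ already yields the equality $\sum_{Q}\lambda_{Q}g_{Q}=\sum_{Q}\lambda_{Q}a_{Q}$ directly at the telescoping step. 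I expect this comparison for $s>2$ to be where the real work lies; its delicate points are the finiteness needed for the absorption and the sign condition $s\ge 2$ that makes the mean value step go the right way, whereas the pointwise inequality and the range $s\le 2$ are routine.
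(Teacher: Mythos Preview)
The paper does not supply its own proof of this lemma; it is quoted verbatim as part of \cite[Proposition~2.2]{MR2086703} and used as a black box. Your argument is correct and provides a self-contained proof. The reduction to a finite grid, the telescoping inequality $\bigl(\sum_{k}c_{k}\bigr)^{s}\le s\sum_{k}S_{k}^{s-1}c_{k}$, the Jensen step for $1<s\le 2$, and the absorption argument for $s>2$ all go through as you describe; the finiteness needed for absorption is guaranteed by the finite-grid reduction, and the convexity/concavity thresholds at $s=2$ are handled correctly. Your approach is essentially the summation-by-parts method used in the cited reference, so there is no genuinely new route here, but it is worth noting that you have reconstructed the argument rather than merely invoking it.
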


The following observation seems to come from \cite[Lemma 5.2]{MR3204859}.
The multilinear version appears in \cite[Proposition 4.8]{MR3850284}.
\begin{lemma}
\label{lem:sum<1}
Let $0\leq \beta_{1},\dots,\beta_{m}$ be such that $\beta:=\sum_{i=1}^{m}\beta_{i}<1$.
Then for every cube $Q$, every Carleson sequence $\tau$, and all functions $w_{1},\dots,w_{m}$ we have
\[
\sum_{Q'\subseteq Q, Q'\in\cD} \tau_{Q'} \meas{Q'} \prod_{i=1}^{m} (w_{i})_{Q'}^{\beta_{i}}
\lesssim
\norm{\tau}_{\Car} \meas{Q} \prod_{i=1}^{m} (w_{i})_{Q}^{\beta_{i}}.
\]
\end{lemma}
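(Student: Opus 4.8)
The plan is to convert the Carleson summation, which is an $L^{1}$-averaged bound on $\tau$, into a pointwise domination by a product of maximal functions, and then to exploit the subcriticality $\beta<1$ through a Kolmogorov-type integrability estimate. Before that I would dispose of the degenerate cases: if $\norm{\tau}_{\Car}=\infty$ there is nothing to prove; if $\int_{Q}w_{i}=0$ or $\int_{Q}w_{i}=\infty$ for some $i$ with $\beta_{i}>0$, then either both sides vanish or the right-hand side is infinite; and if $\beta=0$ the claim is exactly the Carleson condition for $\tau$. So I may assume $\norm{\tau}_{\Car}<\infty$, $0<(w_{i})_{Q}<\infty$ for every $i$ with $\beta_{i}>0$, and $0<\beta<1$.

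First I would apply Lemma~\ref{lem:carleson-sparse} with $\Lambda=\norm{\tau}_{\Car}$ to obtain pairwise disjoint sets $E(Q')\subset Q'$, $Q'\in\cD$, with $\meas{E(Q')}=\tau_{Q'}\meas{Q'}/\norm{\tau}_{\Car}$, so that the left-hand side equals $\norm{\tau}_{\Car}\sum_{Q'\subseteq Q}\meas{E(Q')}\prod_{i}(w_{i})_{Q'}^{\beta_{i}}$. Writing $M$ for the dyadic maximal operator over $\cD$ with respect to $\mu$, $Mg(x)=\sup_{x\in Q'\in\cD}(g)_{Q'}$, one has $(w_{i})_{Q'}\le M(1_{Q}w_{i})(x)$ for every $x\in Q'$, hence in particular for $x\in E(Q')\subseteq Q'\subseteq Q$. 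Therefore $\meas{E(Q')}\prod_{i}(w_{i})_{Q'}^{\beta_{i}}\le\int_{E(Q')}\prod_{i}\bigl(M(1_{Q}w_{i})\bigr)^{\beta_{i}}\,\dif\mu$, and summing over $Q'\subseteq Q$ and using that the $E(Q')$ are disjoint subsets of $Q$ bounds the left-hand side by $\norm{\tau}_{\Car}\int_{Q}\prod_{i}\bigl(M(1_{Q}w_{i})\bigr)^{\beta_{i}}\,\dif\mu$.

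It then remains to prove $\int_{Q}\prod_{i}\bigl(M(1_{Q}w_{i})\bigr)^{\beta_{i}}\,\dif\mu\lesssim\meas{Q}\prod_{i}(w_{i})_{Q}^{\beta_{i}}$, and this is where the hypothesis $\beta<1$ enters. I would choose Hölder exponents $s_{i}\in[1,\infty)$ with $\sum_{i}1/s_{i}=1$ and $\beta_{i}s_{i}<1$ for every $i$; concretely, over the $k\geq 1$ indices with $\beta_{i}>0$ the choice $1/s_{i}=\beta_{i}+(1-\beta)/k$ works because $1-\beta>0$, while the factors with $\beta_{i}=0$ are identically $1$ and may be dropped. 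Hölder's inequality gives $\int_{Q}\prod_{i}\bigl(M(1_{Q}w_{i})\bigr)^{\beta_{i}}\,\dif\mu\leq\prod_{i}\bigl(\int_{Q}\bigl(M(1_{Q}w_{i})\bigr)^{\beta_{i}s_{i}}\,\dif\mu\bigr)^{1/s_{i}}$, and for each $i$ the weak $(1,1)$ inequality for the dyadic maximal function together with Kolmogorov's inequality yields $\int_{Q}\bigl(M(1_{Q}w_{i})\bigr)^{\beta_{i}s_{i}}\,\dif\mu\lesssim_{\beta_{i}s_{i}}\meas{Q}^{1-\beta_{i}s_{i}}\norm{1_{Q}w_{i}}_{L^{1}(\mu)}^{\beta_{i}s_{i}}=\meas{Q}\,(w_{i})_{Q}^{\beta_{i}s_{i}}$. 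Multiplying these estimates and using $\sum_{i}1/s_{i}=1$ produces $\meas{Q}\prod_{i}(w_{i})_{Q}^{\beta_{i}}$ up to a constant depending only on $m$ and $\vec\beta$, which finishes the proof.

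I do not expect a serious obstacle here: once the right reformulation is in place the computation is routine. The one structurally essential step is the passage from the Carleson condition to a pointwise estimate, which is precisely what Lemma~\ref{lem:carleson-sparse} supplies, and the only point at which $\beta<1$ is genuinely needed is in distributing the exponent among the $s_{i}$ so that Kolmogorov's inequality applies; without $\beta<1$ the maximal functions need not be locally integrable and the estimate is false.
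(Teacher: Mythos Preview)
Your proof is correct and follows essentially the same route as the paper: apply Lemma~\ref{lem:carleson-sparse} to replace $\tau_{Q'}\meas{Q'}$ by $\norm{\tau}_{\Car}\meas{E(Q')}$, dominate the averages pointwise by $M(1_{Q}w_{i})$, and then use H\"older together with the weak $(1,1)$/Kolmogorov bound. The only cosmetic difference is in the choice of H\"older exponents: the paper takes the uniform exponents $\beta/\beta_{i}$ so that each factor becomes $\int_{Q}(M(1_{Q}w_{i}))^{\beta}$, whereas you distribute the slack $1-\beta$ among the indices via $1/s_{i}=\beta_{i}+(1-\beta)/k$; both choices achieve $\beta_{i}s_{i}<1$ and lead to the same conclusion.
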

\begin{proof}
By definition of an infinite sum it suffices to prove the estimate for finite dyadic grids with a constant independent of the cardinality of the grid.
Assume that $\cD$ is finite and apply Lemma~\ref{lem:carleson-sparse} with $\Lambda=\norm{\tau}_{\Car}$ to the sequence $\tau$.
With the sets $E(Q)$ from that lemma we can write the left-hand side of the conclusion as
\begin{align*}
&=
  \norm{\tau}_{\Car} \sum_{Q'\subseteq Q, Q'\in\cD} \meas{E(Q')} \prod_{i=1}^{m} (w_{i})_{Q'}^{\beta_{i}}\\
&\leq
  \norm{\tau}_{\Car} \int_{Q} \prod_{i=1}^{m} (M(w_{i}1_{Q}))^{\beta_{i}}\\  
&\leq
  \norm{\tau}_{\Car} \prod_{i=1}^{m} \big( \int_{Q} (M(w_{i}1_{Q}))^{\beta} \big)^{\beta_{i}/\beta}.
\end{align*}
In each factor we have the estimate
\begin{multline*}
\int_{Q} (M(w1_{Q}))^{\beta}
\lesssim
\sum_{k\in\Z} 2^{k\beta} \meas{Q\cap \{M(w1_{Q})>2^{k}\}}\\
\lesssim
\meas{Q} \sum_{k\in\Z} 2^{k\beta} \min(1,2^{-k}(w)_{Q})
\lesssim
\max(1/\beta,1/(1-\beta)) \meas{Q} (w)_{Q}^{\beta}
\end{multline*}
by the weak type $(1,1)$ inequality for the maximal function.
\end{proof}

\subsection{Key lemma}
\label{sec:key}
The only place in which the multilinear Muckenhoupt characteristic arises in our arguments is the following result that simplifies and extends \cite[Lemma 4.15]{MR3850284}.
\begin{lemma}
\label{lem:key}
Let $s_{1},\dots,s_{m}\in\R$ with $s_{i}>0$ for $i\neq j$, and $q_{1},\dots,q_{m}>0$ with $q_{j} = 1+s_{j}$ be such that
\[
\sum_{i}s_{i} \leq \sum_{i}q_{i},
\quad
\frac{\sum_{i}s_{i}}{\sum_{i}q_{i}} < \min_{i\neq j} \frac{s_{i}}{q_{i}}.
\]
Then for every $0<\alpha<\infty$ we have
\[
\int \Big( \sum_{Q\in\cD} \tau_{Q} \prod_{i=1}^{m} (w_{i})_{Q}^{s_{i} \alpha} 1_{Q} \Big)^{1/\alpha} \dif w_{j}
\lesssim_{\vec s,\vec q,\alpha}
[\vec w]^{\vec q}
\sum_{Q\in\cD} \tau_{Q} \meas{Q} \prod_{i\neq j} (w_{i})_{Q}^{s_{i} - q_{i}}.
\]
\end{lemma}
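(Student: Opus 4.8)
The plan is to first reduce to the case of a finite dyadic grid $\cD$, so that every sum is finite, $\|\tau\|_{\Car}$ is attained, and the cubes $Q$ with $\int_Q w_j=0$ (whose union is $w_j$-null) may be discarded. Write $\sigma$ for the measure $w_j$. The case $\alpha=1$ is then immediate: the left-hand side equals $\sum_Q\tau_Q\meas{Q}(w_j)_Q^{1+s_j}\prod_{i\neq j}(w_i)_Q^{s_i}$, which by $q_j=1+s_j$ is $\sum_Q\tau_Q\meas{Q}\big(\prod_i(w_i)_Q^{q_i}\big)\prod_{i\neq j}(w_i)_Q^{s_i-q_i}\leq[\vec w]^{\vec q}\sum_Q\tau_Q\meas{Q}\prod_{i\neq j}(w_i)_Q^{s_i-q_i}$. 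For $\alpha\neq 1$ I would split into the cases $0<\alpha<1$ and $\alpha>1$.

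The case $0<\alpha<1$ is the main one, and here I would argue via Lemma~\ref{lem:cov}. Setting $\lambda_Q:=\sigma(Q)\,\tau_Q\prod_i(w_i)_Q^{s_i\alpha}$, the left-hand side is exactly $\int\big(\sum_Q\frac{\lambda_Q}{\sigma(Q)}1_Q\big)^{1/\alpha}\dif\sigma$; since $1/\alpha>1$, Lemma~\ref{lem:cov} with $s=1/\alpha$ bounds it by $\sum_Q\lambda_Q\big(\sigma(Q)^{-1}\sum_{Q'\subseteq Q}\lambda_{Q'}\big)^{1/\alpha-1}$, reducing everything to the Carleson-type tails $\sigma(Q)^{-1}\sum_{Q'\subseteq Q}\lambda_{Q'}$. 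Writing $\lambda_{Q'}=\tau_{Q'}\meas{Q'}(w_j)_{Q'}^{1+\alpha s_j}\prod_{i\neq j}(w_i)_{Q'}^{\alpha s_i}$, I would peel off a factor $\big(\prod_i(w_i)_{Q'}^{q_i}\big)^{\eta}\leq([\vec w]^{\vec q})^{\eta}$ for a parameter $\eta\geq 0$ chosen so that the remaining averages $\prod_i(w_i)_{Q'}^{\beta_i}$, with $\beta_j=1+\alpha s_j-\eta q_j$ and $\beta_i=\alpha s_i-\eta q_i$ ($i\neq j$), have all $\beta_i\geq 0$ and $\sum_i\beta_i<1$; this amounts to
\[
\frac{\alpha\sum_i s_i}{\sum_i q_i} < \eta \leq \min\Big\{\alpha\min_{i\neq j}\tfrac{s_i}{q_i},\ \tfrac{1+\alpha s_j}{q_j},\ \tfrac{\alpha}{1-\alpha}\Big\},
\]
and the two hypotheses on $\vec s,\vec q$, together with $0<\alpha<1$ and $q_j=1+s_j$, are exactly what guarantees this interval is nonempty and meets $[0,\infty)$: the bound by $\alpha\min_{i\neq j}s_i/q_i$ uses $\frac{\sum s_i}{\sum q_i}<\min_{i\neq j}\frac{s_i}{q_i}$, the bound by $\frac{\alpha}{1-\alpha}$ uses $\frac{\sum s_i}{\sum q_i}\leq 1<\frac1{1-\alpha}$ (this last ensures $\eta(1/\alpha-1)\leq 1$, needed below), and the bound by $\frac{1+\alpha s_j}{q_j}$ uses $\alpha q_j=\alpha(1+s_j)<1+\alpha s_j$ together with $\sum s_i\leq\sum q_i$. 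Then Lemma~\ref{lem:sum<1} gives $\sigma(Q)^{-1}\sum_{Q'\subseteq Q}\lambda_{Q'}\lesssim([\vec w]^{\vec q})^{\eta}\|\tau\|_{\Car}\,(w_j)_Q^{\alpha s_j-\eta q_j}\prod_{i\neq j}(w_i)_Q^{\alpha s_i-\eta q_i}$.

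It then remains to substitute this back and bookkeep the exponents. Using $s=1/\alpha$, all the $\alpha$'s cancel: the power of $(w_i)_Q$ in the resulting summand becomes $s_i-\eta q_i(1/\alpha-1)$ for $i\neq j$ and $q_j\big(1-\eta(1/\alpha-1)\big)$ for $i=j$. Writing $\delta:=\eta(1/\alpha-1)\in[0,1]$, this product equals $\prod_{i\neq j}(w_i)_Q^{s_i-q_i}\cdot\big(\prod_i(w_i)_Q^{q_i}\big)^{1-\delta}\leq([\vec w]^{\vec q})^{1-\delta}\prod_{i\neq j}(w_i)_Q^{s_i-q_i}$, so the two accumulated factors $([\vec w]^{\vec q})^{\delta}$ and $([\vec w]^{\vec q})^{1-\delta}$ combine into $[\vec w]^{\vec q}$, while $\|\tau\|_{\Car}$ appears to the power $1/\alpha-1$. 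This gives the assertion for $0<\alpha<1$.

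The hard part will be the range $\alpha>1$, where $1/\alpha<1$ and Lemma~\ref{lem:cov} is unavailable. Plain subadditivity $(\sum a_Q)^{1/\alpha}\leq\sum a_Q^{1/\alpha}$ is too lossy — it replaces $\tau_Q$ by $\tau_Q^{1/\alpha}$, and since $\tau_Q\leq\|\tau\|_{\Car}$ this only gives $\tau_Q^{1/\alpha}\geq\|\tau\|_{\Car}^{1/\alpha-1}\tau_Q$, so it does not produce the required gain $\|\tau\|_{\Car}^{1/\alpha-1}$. Here I would instead extract the Carleson gain directly from the sparse structure (Lemma~\ref{lem:carleson-sparse}), for instance via a stopping-time decomposition of $\cD$ that reduces matters to chains along which $\tau_Q\prod_i(w_i)_Q^{s_i\alpha}$ increases geometrically, where a telescoping estimate suffices; the factor $[\vec w]^{\vec q}$ is then produced exactly as in the case $\alpha<1$ via Lemma~\ref{lem:sum<1}. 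Apart from this sub-$L^1$ regime, the only delicate point is the bookkeeping with $\eta$ and checking that the hypotheses keep its admissible interval nonempty.
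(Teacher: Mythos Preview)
Your treatment of $0<\alpha\leq 1$ is correct and is precisely the paper's argument: apply Lemma~\ref{lem:cov} with exponent $1/\alpha>1$, peel off a factor $([\vec w]^{\vec q})^{\eta}$ from the inner Carleson tail, invoke Lemma~\ref{lem:sum<1}, and then peel off the residual $([\vec w]^{\vec q})^{1-\eta(1/\alpha-1)}$. Your parameter $\eta$ is the paper's $\epsilon$, and the admissible interval you write down for it,
\[
\frac{\alpha\sum_i s_i}{\sum_i q_i}<\eta\leq \min\Big\{\frac{\alpha}{1-\alpha},\ \min_i\frac{\alpha s_i+\delta_{ij}}{q_i}\Big\},
\]
coincides with the paper's. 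You even verify nonemptiness for every $\alpha\in(0,1)$, whereas the paper only needs it for small $\alpha$.

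The gap is the range $\alpha>1$, which you label ``the hard part'' and for which you only sketch an unexecuted stopping-time argument. The paper handles this range with a one-line reduction that you miss: by homogeneity (both sides scale like $\norm{\tau}_{\Car}^{1/\alpha}$) one may normalize $\norm{\tau}_{\Car}=1$; then the right-hand side is independent of $\alpha$, while the left-hand side is asserted to be monotonically decreasing in $\alpha$, so it suffices to prove the estimate for arbitrarily small $\alpha$ --- and that is exactly the case you have already established. For $\{0,1\}$-valued $\tau$ this monotonicity is transparent: at each point $x$ the quantity $\bigl(\sum_{Q\ni x,\,\tau_Q=1}b_Q^{\alpha}\bigr)^{1/\alpha}$, with $b_Q=\prod_i(w_i)_Q^{s_i}$, is the $\ell^{\alpha}$ norm of a fixed sequence, hence decreasing in $\alpha$. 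Your diagnosis that subadditivity is ``too lossy'' because it fails to produce the factor $\norm{\tau}_{\Car}^{1/\alpha-1}$ overlooks that after normalization this factor equals $1$; but more to the point, the paper does not use subadditivity here at all --- it uses monotonicity in $\alpha$ to reduce to the range where Lemma~\ref{lem:cov} applies. That reduction is the missing idea in your proposal, and it renders the separate stopping-time construction you outline unnecessary.
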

\begin{proof}
Then the left-hand side of the conclusion is monotonically decreasing in $\alpha$ (here we use that $\tau$ is $\Set{0,1}$-valued) and the right-hand side does not depend on $\alpha$, so it suffices to consider small $\alpha$, in particular we may assume $s_{i}\alpha+\delta_{ij}>0$ and $\alpha<1$.

It follows from the hypothesis that for sufficiently small $\alpha$ there exists an $\epsilon$ such that
\[
\frac{\alpha \sum_{i}s_{i}}{\sum_{i}q_{i}} < \epsilon \leq \min( \frac{1}{1/\alpha-1}, \min_{i} \frac{\alpha s_{i}+\delta_{ij}}{q_{i}}).
\]
By the assumption $\alpha<1$ and Lemma~\ref{lem:cov} the left-hand side of the conclusion is
\[
\sim \sum_{Q} \tau_{Q} \meas{Q} \prod_{i=1}^{m} (w_{i})_{Q}^{\alpha s_{i}+\delta_{ij}}
\Big(w_{j}(Q)^{-1} \sum_{Q'\subseteq Q} \tau_{Q'} \meas{Q'} \prod_{i=1}^{m} (w_{i})_{Q'}^{\alpha s_{i}+\delta_{ij}} \Big)^{1/\alpha-1}
\]
By definition of $\vec q$-characteristic this is
\[
\leq [\vec w]^{\epsilon (1/\alpha-1) \vec q} \sum_{Q} \tau_{Q} \meas{Q} \prod_{i=1}^{m} (w_{i})_{Q}^{\alpha s_{i}+\delta_{ij}}
\Big(w_{j}(Q)^{-1} \sum_{Q'\subseteq Q} \tau_{Q'} \meas{Q'} \prod_{i=1}^{m} (w_{i})_{Q'}^{\alpha s_{i}+\delta_{ij}-\epsilon q_{i}} \Big)^{1/\alpha-1}
\]
By construction we have $\alpha s_{i}+\delta_{ij}-\epsilon q_{i}\geq 0$ and $\sum_{i}(\alpha s_{i}+\delta_{ij}-\epsilon q_{i})<1$.
Hence by Lemma~\ref{lem:sum<1} the above is
\[
\lesssim
[\vec w]^{\epsilon (1/\alpha-1) \vec q} \sum_{Q} \tau_{Q} \meas{Q} \prod_{i=1}^{m} (w_{i})_{Q}^{\alpha s_{i}+\delta_{ij}}
\Big(w_{j}(Q)^{-1} \meas{Q} \prod_{i=1}^{m} (w_{i})_{Q}^{\alpha s_{i}+\delta_{ij}-\epsilon q_{i}} \Big)^{1/\alpha-1}
\]
\[
=
[\vec w]^{\epsilon (1/\alpha-1) \vec q}
\sum_{Q} \tau_{Q} \meas{Q} \prod_{i=1}^{m} (w_{i})_{Q}^{\delta_{ij}+s_{i} -\epsilon q_{i}(1/\alpha-1)}
\]
By construction we have $1-\epsilon (1/\alpha-1)\geq 0$, and using the definition of $\vec q$-characteristic again we obtain the bound
\[
\leq
[\vec w]^{\vec q}
\sum_{Q} \tau_{Q} \meas{Q} \prod_{i=1}^{m} (w_{i})_{Q}^{\delta_{ij}+s_{i} -q_{i}}.
\]
By hypothesis on $q_{j}$ the conclusion follows.
\end{proof}

\subsection{Stopping times}
Let us now introduce the stopping cube families for the martingale maximal function.
Let $\cD$ be a finite dyadic grid and let $\lambda_{i}:\cD\to [0,\infty)$, $Q\mapsto \lambda_{i,Q}$ be a function that takes a cube to a non-negative real number.
The stopping time $\mathcal{F}_{i}$ is the minimal family of cubes with the following properties:
\begin{enumerate}
\item the maximal members of $\cD$ are contained in $\mathcal{F}_{i}$, and
\item if $F\in\mathcal{F}_{i}$, then every maximal subcube $F'\subset F$ with $\lambda_{i,F'}\geq 2\lambda_{i,F}$ is also a member of $\mathcal{F}_{i}$.
\end{enumerate}
For each cube $Q$ let $\pi_{i}(Q)$ (the parent of $Q$ in the stopping family $\mathcal{F}_{i}$) be the smallest cube with $Q\subseteq \pi_{i}(Q) \in \mathcal{F}_{i}$.
We write $\sum_{F_{1},\dots, F_{m}}$ for the sum running over $F_{i}\in\mathcal{F}_{i}$.
We also write
\[
M\lambda_{i}(x) := \sup_{x\in Q\in \cD} \lambda_{i,Q}.
\]

\begin{corollary}
\label{cor:char}
Let $m\geq 2$, $0<p_{1},\dots,p_{m-1}<\infty$.
Define $\alpha := \sum_{i=1}^{m-1} 1/p_{i}$ and suppose
\[
0
<
q_{i}
:=
s_{i} -
\begin{cases}
1/p_{i},& i<m,\\
1-\alpha,& i=m.
\end{cases}
\]
Then
\begin{multline*}
\Big( \sum_{F_{1},\dots,F_{m-1}} \prod_{i=1}^{m-1} \lambda_{i,F_{i}}^{1/\alpha}
\int
\Big(\sum_{Q : \pi_{i}(Q)=F_{i}} \tau_{Q} 1_{Q} \prod_{i=1}^{m} (w_{i})_{Q}^{s_{i}-\delta_{im}} \Big)^{1/\alpha} dw_{m} \Big)^{\alpha}\\
\lesssim
[\vec w]^{\vec q} [\vec w]_{FW}^{(1/p_{i})_{i\neq m}}
\prod_{i=1}^{m-1} \norm{ M\lambda_{i} }_{L^{p_{i}}(w_{i})}.
\end{multline*}
\end{corollary}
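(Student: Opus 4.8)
The plan is to apply the key estimate Lemma~\ref{lem:key} separately for each tuple $\vec F=(F_{1},\dots,F_{m-1})$ of stopping cubes and then to recombine the resulting bounds using the stopping structure of the families $\mathcal{F}_{i}$ together with the defining property of the Fujii--Wilson characteristic. By homogeneity in $\tau$ I may assume $\norm{\tau}_{\Car}=1$, and it suffices to treat finite dyadic grids.

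Fix $\vec F$ and let $\tau^{\vec F}$ be the sequence agreeing with $\tau$ on the cubes $Q$ with $\pi_{i}(Q)=F_{i}$ for every $i<m$ and vanishing elsewhere, so that $\norm{\tau^{\vec F}}_{\Car}\leq 1$; every cube in its support is contained in $F^{*}:=\bigcap_{i<m}F_{i}$ (the smallest of the $F_{i}$; if they are not pairwise nested then $\tau^{\vec F}\equiv 0$ and the term drops out). First I would apply Lemma~\ref{lem:key} to $\tau^{\vec F}$ with $j=m$, with free parameter equal to the present $\alpha$, and with exponents $\tilde s_{i}:=(s_{i}-\delta_{im})/\alpha$, $\tilde q_{i}:=q_{i}/\alpha$. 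Its hypotheses hold: $\tilde q_{m}=1+\tilde s_{m}$, $\sum_{i}\tilde s_{i}=\sum_{i}\tilde q_{i}$ (because $\sum_{i}s_{i}=\sum_{i}q_{i}+1$, which is forced by the relations defining the $q_{i}$), and $\frac{\sum_{i}\tilde s_{i}}{\sum_{i}\tilde q_{i}}=1<\min_{i<m}\frac{s_{i}}{q_{i}}$ since $s_{i}=q_{i}+1/p_{i}>q_{i}$. As $\tilde s_{i}-\tilde q_{i}=1/(\alpha p_{i})$ for $i<m$ and $[\vec w]^{\tilde{\vec q}}=([\vec w]^{\vec q})^{1/\alpha}$, the lemma gives
\[
\int\Big(\sum_{Q:\pi_{i}(Q)=F_{i}}\tau_{Q}1_{Q}\prod_{i=1}^{m}(w_{i})_{Q}^{s_{i}-\delta_{im}}\Big)^{1/\alpha}\dif w_{m}
\lesssim
\norm{\tau^{\vec F}}_{\Car}^{1/\alpha-1}([\vec w]^{\vec q})^{1/\alpha}\sum_{Q:\pi_{i}(Q)=F_{i}}\tau_{Q}\meas{Q}\prod_{i<m}(w_{i})_{Q}^{1/(\alpha p_{i})}.
\]

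Next I would estimate the sum on the right. Since $\sum_{i<m}1/(\alpha p_{i})=1$, Lemma~\ref{lem:sum<1} is not available, but this borderline sum is precisely what the Fujii--Wilson characteristic controls: applying Lemma~\ref{lem:carleson-sparse} to $\tau^{\vec F}$ to obtain disjoint $E(Q)\subset Q$ with $\meas{E(Q)}=\tau^{\vec F}_{Q}\meas{Q}/\norm{\tau^{\vec F}}_{\Car}$, using $\prod_{i<m}(w_{i})_{Q}^{1/(\alpha p_{i})}\leq\cM^{(1/(\alpha p_{i}))_{i\neq m}}(1_{F^{*}}\vec w)$ on $E(Q)\subset Q\subset F^{*}$, summing over the disjoint $E(Q)\subset F^{*}$, and invoking the definition of $[\vec w]_{FW}^{(1/(\alpha p_{i}))_{i\neq m}}=([\vec w]_{FW}^{(1/p_{i})_{i\neq m}})^{1/\alpha}$, I obtain
\[
\sum_{Q:\pi_{i}(Q)=F_{i}}\tau_{Q}\meas{Q}\prod_{i<m}(w_{i})_{Q}^{1/(\alpha p_{i})}
\leq
\norm{\tau^{\vec F}}_{\Car}\,([\vec w]_{FW}^{(1/p_{i})_{i\neq m}})^{1/\alpha}\int_{F^{*}}\prod_{i<m}w_{i}^{1/(\alpha p_{i})}.
\]
Crucially, the factor $\norm{\tau^{\vec F}}_{\Car}$ here combines with the $\norm{\tau^{\vec F}}_{\Car}^{1/\alpha-1}$ above into $\norm{\tau^{\vec F}}_{\Car}^{1/\alpha}\leq 1$, so no (possibly negative) power of a Carleson norm survives. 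Multiplying the two displays, multiplying by $\prod_{i<m}\lambda_{i,F_{i}}^{1/\alpha}$, and summing over $\vec F$, the left side becomes the quantity to be bounded; using $1_{F^{*}}=\prod_{i<m}1_{F_{i}}$ and $\sum_{\vec F}=\prod_{i<m}\sum_{F_{i}\in\mathcal{F}_{i}}$ turns the right side into
\[
\lesssim
([\vec w]^{\vec q})^{1/\alpha}([\vec w]_{FW}^{(1/p_{i})_{i\neq m}})^{1/\alpha}\int\prod_{i<m}\Big(\sum_{F_{i}\in\mathcal{F}_{i}}\lambda_{i,F_{i}}^{1/\alpha}1_{F_{i}}\Big)w_{i}^{1/(\alpha p_{i})}\dif\mu.
\]
Since $\lambda_{i}$ drops by a factor at least $2$ at each generation of $\mathcal{F}_{i}$, for each $x$ the cubes of $\mathcal{F}_{i}$ containing $x$ form a chain along which $\lambda_{i,F_{i}}^{1/\alpha}$ is summable geometrically, whence $\sum_{F_{i}\ni x}\lambda_{i,F_{i}}^{1/\alpha}\lesssim_{\alpha}(M\lambda_{i}(x))^{1/\alpha}$ pointwise. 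Thus the integral is $\lesssim\int\prod_{i<m}(M\lambda_{i}^{p_{i}}w_{i})^{1/(\alpha p_{i})}\dif\mu$, and Hölder's inequality with the exponents $\alpha p_{i}$ (conjugate because $\sum_{i<m}1/(\alpha p_{i})=1$) bounds this by $\prod_{i<m}\norm{M\lambda_{i}}_{L^{p_{i}}(w_{i})}^{1/\alpha}$. Raising to the power $\alpha$ and restoring $\norm{\tau}_{\Car}$ by homogeneity yields the asserted inequality.

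The hard part will be the bookkeeping that makes Lemma~\ref{lem:key} directly applicable — choosing $\tilde{\vec s},\tilde{\vec q}$ so that $\sum_{i}\tilde s_{i}=\sum_{i}\tilde q_{i}$ while the strict ratio inequality $\frac{\sum\tilde s}{\sum\tilde q}<\min_{i<m}\tilde s_{i}/\tilde q_{i}$ is retained — and recognizing that the Fujii--Wilson step simultaneously disposes of the borderline exponent $\sum_{i<m}1/(\alpha p_{i})=1$ and produces exactly the Carleson-norm factor needed to neutralize the power $1/\alpha-1$ (negative when $\alpha>1$) coming out of Lemma~\ref{lem:key}; the stopping-time summation and the final application of Hölder's inequality are routine.
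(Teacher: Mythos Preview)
Your proof is correct and follows essentially the same route as the paper's: apply Lemma~\ref{lem:key} (with the identical choice $\tilde s_{i}=(s_{i}-\delta_{im})/\alpha$, $\tilde q_{i}=q_{i}/\alpha$, $j=m$) to each $\vec F$, use the Fujii--Wilson characteristic to dispose of the residual sum with exponents summing to $1$, then Hölder and the geometric stopping property. One presentational difference is worth noting: by keeping the per-family Carleson norm $\norm{\tau^{\vec F}}_{\Car}$ separate through both steps and only bounding the combined power $\norm{\tau^{\vec F}}_{\Car}^{1/\alpha}\leq\norm{\tau}_{\Car}^{1/\alpha}$ at the end, you handle the case $\alpha>1$ (where $1/\alpha-1<0$) more transparently than the paper, which pulls $\norm{\tau}_{\Car}^{1-\alpha}$ outside at the intermediate stage \eqref{eq:after-key}; your bookkeeping makes explicit exactly why no negative power survives.
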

\begin{proof}
We estimate the integral on the left-hand side of the conclusion using Lemma~\ref{lem:key} with
\[
\tilde s_{i} = (s_{i}-\delta_{im})/\alpha,
\quad i\leq m
\]
\[
\tilde q_{i} = q_{i}/\alpha
\]
This is indeed possible, because we have
\[
\sum_{i\leq m} \alpha \tilde q_{i}
=
\sum_{i\leq m} s_{i} - (1-\alpha) - \sum_{i<m}1/p_{i}
=
\sum_{i\leq m} s_{i} -1
=
\sum_{i\leq m} \alpha \tilde s_{i}
\]
so that equality holds in the first inequality in the hypothesis of the lemma, and for $i<m$ we have
\[
\tilde q_{i} < \tilde s_{i},
\]
verifying the second inequality.

We obtain the estimate
\begin{equation}
\label{eq:after-key}
\Big( \sum_{F_{1},\dots,F_{m-1}} \prod_{i=1}^{m-1} \lambda_{i,F_{i}}^{1/\alpha}
[\vec w]^{\tilde q} \sum_{Q : \pi_{i}(Q)=F_{i}} \tau_{Q} \meas{Q} \prod_{i=1}^{m-1} (w_{i})_{Q}^{\tilde s_{i}-\tilde q_{i}} \Big)^{\alpha}
\end{equation}
By Lemma~\ref{lem:carleson-sparse} we can find disjoint measurable subsets $E(Q) \subset Q$ such that
\[
\sum_{Q : \pi_{i}(Q)=F_{i}} \tau_{Q} \meas{Q} \prod_{i=1}^{m-1} (w_{i})_{Q}^{\tilde s_{i}-\tilde q_{i}}
=
\norm{\tau}_{\Car}
\sum_{Q : \pi_{i}(Q)=F_{i}} \meas{E(Q)} \prod_{i=1}^{m-1} (w_{i})_{Q}^{1/(\alpha p_{i})},
\]
and by definition of the multilinear maximal function this is
\[
\leq
\norm{\tau}_{\Car} \sum_{Q : \pi_{i}(Q)=F_{i}} \int_{E(Q)} \cM^{(1/(\alpha p_{i}))_{i\neq m}} (\vec{w})
\leq
\norm{\tau}_{\Car} \int_{F_{1}\cap\dotsb\cap F_{m-1}} \cM^{(1/(\alpha p_{i}))_{i\neq m}} (\vec{w}).
\]
By definition \eqref{eq:def:FW} of the FW characteristic we obtain
\[
\eqref{eq:after-key}
\leq
\norm{\tau}_{\Car} [\vec w]^{\vec q}
\Big( \sum_{F_{1},\dots, F_{m-1}} \prod_{i=1}^{m-1} \lambda_{i,F_{i}}^{1/\alpha}
[\vec w]_{FW}^{(1/(\alpha p_{i}))_{i\neq m}} \int_{F_{1}\cap\dots\cap F_{m-1}} \prod_{i=1}^{m-1} w_{i}^{1/(\alpha p_{i})} \Big)^{\alpha}
\]
\[
=
\norm{\tau}_{\Car} [\vec w]^{\vec q} [\vec w]_{FW}^{(1/p_{i})_{i\neq m}}
\Big( \int \prod_{i=1}^{m-1} \sum_{F_{i}} 1_{F_{i}} \lambda_{i,F_{i}}^{1/\alpha} w_{i}^{1/(\alpha p_{i})} \Big)^{\alpha}.
\]
By Hölder's inequality this is bounded by
\[
\leq
\norm{\tau}_{\Car} [\vec w]^{\vec q} [\vec w]_{FW}^{(1/p_{i})_{i\neq m}}
\prod_{i=1}^{m-1} \Big( \int \bigl( \sum_{F_{i}} 1_{F_{i}} \lambda_{i,F_{i}}^{1/\alpha} \bigr)^{\alpha p_{i}} w_{i} \Big)^{1/p_{i}}.
\]
It remains to observe that by definition of the stopping times we have
\[
\bigl( \sum_{F_{i}} 1_{F_{i}} \lambda_{i,F_{i}}^{1/\alpha} \bigr)^{\alpha p_{i}}
\sim
(M \lambda_{i})^{p_{i}},
\]
since at each point the sum on the left-hand side is geometrically increasing and therefore comparable to the maximal summand.
\end{proof}
Alternatively, one can estimate
\[
\cM^{(1/(\alpha p_{i}))_{i\neq m}} (\vec{w})
\leq
\prod_{i=1}^{m-1} (M w_{i})^{1/(\alpha p_{i})},
\]
and the same argument as above gives
\[
\eqref{eq:after-key}
\leq
\norm{\tau}_{\Car} [\vec w]^{\vec q}
\prod_{i=1}^{m-1} \Big( \int \bigl( \sum_{F_{i}} 1_{F_{i}} \lambda_{i,F_{i}}^{1/\alpha} \bigr)^{\alpha p_{i}} M w_{i} \Big)^{1/p_{i}}.
\]
Since the summands $\lambda_{i,F_{i}}$ contributing to $\sum_{F_{i}}$ at each point are geometrically increasing, this is
\[
\lesssim
\norm{\tau}_{\Car} [\vec w]^{\vec q}
\prod_{i=1}^{m-1} \Big( \int \sum_{F_{i}} 1_{F_{i}} \lambda_{i,F_{i}}^{p_{i}} M w_{i} \Big)^{1/p_{i}}.
\]
By definition of the linear FW characteristic this is
\[
\leq
\norm{\tau}_{\Car} [\vec w]^{\vec q}
\prod_{i=1}^{m-1} \Big( [w_{i}]_{FW} \int \sum_{F_{i}} 1_{F_{i}} \lambda_{i,F_{i}}^{p_{i}} w_{i} \Big)^{1/p_{i}}.
\]
This yields an estimate similar to Corollary~\ref{cor:char} with $[\vec w]_{FW}^{(1/p_{i})_{i\neq m}}$ replaced by $\prod_{i\neq m} [w_{i}]_{FW}^{1/p_{i}}$.
It would be interesting to find a joint refinement of these two estimates.

\subsection{Case distinction}
Corollary~\ref{cor:char} can be applied both to the sparse forms $\sform_{\vec\rho}^{\vec r}$ inside the duality range (when $\alpha<1$) and to the sparse operators $\sop_{\vec\rho}^{\vec r}$ outside of the duality range (when $\alpha\geq 1$).
In this section we formulate these results in terms of general sequences $\lambda_{i}$, they will be replaced by suitable means of functions $f_{i}$ in the next section.
\begin{theorem}
\label{thm:convex-range}
Let $m\geq 2$ and $1<p_{i}<\infty$, $i=1,\dots,m$, be such that $\sum_{i=1}^{m} 1/p_{i}=1$ and suppose $q_{i}=s_{i}-1/p_{i}>0$.
Let $\{1,\dots,m\}=J_{s}\cup J_{r}$ be a partition with $J_{s}$ non-empty.
Then
\begin{equation}
\label{eq:thm:convex-range}
\sum_{Q\in \cD} \tau_{Q} \meas{Q} \prod_{i=1}^{m} \lambda_{i,Q} (w_{i})_{Q}^{s_{i}}
\lesssim
[\vec w]^{\vec q} \sum_{j\in J_{s}} [\vec w]_{FW}^{(1/p_{i})_{i\neq j}}
\prod_{i\in J_{s}} \norm{ M\lambda_{i} }_{L^{p_{i}}(w_{i})}
\prod_{i\in J_{r}} \norm{ M\lambda_{i} }_{L^{p_{i},1}(w_{i})}.
\end{equation}
\end{theorem}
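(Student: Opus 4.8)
The plan is to isolate one index of $J_{s}$, use it to turn the left-hand side of \eqref{eq:thm:convex-range} into an integral that the machinery of Lemma~\ref{lem:key} can digest, and to treat the indices of $J_{r}$ by a level-set decomposition. Since the right-hand side contains the full nonnegative sum $\sum_{j\in J_{s}}[\vec w]_{FW}^{(1/p_{i})_{i\neq j}}$, it suffices to bound the left-hand side by the term corresponding to one conveniently chosen index $j_{0}\in J_{s}$; after relabeling we may assume $j_{0}=m$. Using $w_{m}(Q)=\meas{Q}(w_{m})_{Q}$ and $\lambda_{m,Q}1_{Q}\leq M\lambda_{m}\cdot 1_{Q}$ we obtain
\[
\sum_{Q\in\cD}\tau_{Q}\meas{Q}\prod_{i=1}^{m}\lambda_{i,Q}(w_{i})_{Q}^{s_{i}}
\;\leq\;
\int M\lambda_{m}\cdot G\,\dif w_{m},
\qquad
G:=\sum_{Q\in\cD}\tau_{Q}1_{Q}(w_{m})_{Q}^{s_{m}-1}\prod_{i<m}\lambda_{i,Q}(w_{i})_{Q}^{s_{i}},
\]
and H\"older's inequality with exponents $(p_{m},p_{m}')$ in $\dif w_{m}$ peels off $\norm{M\lambda_{m}}_{L^{p_{m}}(w_{m})}$. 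The task reduces to bounding $\norm{G}_{L^{p_{m}'}(w_{m})}$ by $\norm{\tau}_{\Car}[\vec w]^{\vec q}[\vec w]_{FW}^{(1/p_{i})_{i\neq m}}\prod_{i\in J_{s}\setminus\{m\}}\norm{M\lambda_{i}}_{L^{p_{i}}(w_{i})}\prod_{i\in J_{r}}\norm{M\lambda_{i}}_{L^{p_{i},1}(w_{i})}$.

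Since $m\in J_{s}$ we have $J_{r}\subseteq\{1,\dots,m-1\}$. For each $i\in J_{r}$ I would insert into $G$ the pointwise bound $\lambda_{i,Q}\leq\sum_{k_{i}\in\Z}2^{k_{i}+1}1_{\{\lambda_{i,Q}>2^{k_{i}}\}}$ and use that $\{Q:\lambda_{i,Q}>2^{k_{i}}\}$ consists of cubes contained in the level set $\{M\lambda_{i}>2^{k_{i}}\}$. This expands $G$ into a sum over tuples $(k_{i})_{i\in J_{r}}$, weighted by $\prod_{i\in J_{r}}2^{k_{i}+1}$, of ``localized'' sums in which each $\prod_{i\in J_{r}}\lambda_{i,Q}(w_{i})_{Q}^{s_{i}}$ is replaced by $\prod_{i\in J_{r}}(w_{i})_{Q}^{s_{i}}$ and $Q$ runs only over cubes contained in the set $\Omega_{(k_{i})}:=\bigcap_{i\in J_{r}}\{M\lambda_{i}>2^{k_{i}}\}$, which is a union of cubes. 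Minkowski's inequality in $L^{p_{m}'}(w_{m})$ then bounds $\norm{G}_{L^{p_{m}'}(w_{m})}$ by the weighted $\ell^{1}$-sum over $(k_{i})$ of the $L^{p_{m}'}(w_{m})$-norms of the localized pieces.

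Each localized piece has the shape of $G$ but carries no $\lambda_{i}$ for $i\in J_{r}$, and I would estimate it as in the proof of Corollary~\ref{cor:char}: introduce stopping families $\mathcal{F}_{i}$ for $i\in J_{s}\setminus\{m\}$ to freeze the $\lambda_{i,Q}$; apply Lemma~\ref{lem:key} with $j=m$ and $\alpha$ small, which produces the factor $[\vec w]^{\vec q}$ and leaves a residual weight sum whose exponents sum to $1$; use the definition of the Fujii--Wilson characteristic to absorb the remaining weight factors --- including $\prod_{i\in J_{r}}(w_{i})_{Q}^{s_{i}}$, whose exponents are first lowered to $1/p_{i}$ via $[\vec w]^{\vec q}$ --- into $[\vec w]_{FW}^{(1/p_{i})_{i\neq m}}$; and finally H\"older to separate the single-index factors. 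The $J_{s}\setminus\{m\}$ factors are controlled by the stopping families and yield $\norm{M\lambda_{i}}_{L^{p_{i}}(w_{i})}$ as in Corollary~\ref{cor:char}, while the $J_{r}$ factors come out as $w_{i}(\{M\lambda_{i}>2^{k_{i}}\})^{1/p_{i}}$; summing the weighted $\ell^{1}$-series over $(k_{i})$ reconstructs $\prod_{i\in J_{r}}\norm{M\lambda_{i}}_{L^{p_{i},1}(w_{i})}\sim\prod_{i\in J_{r}}\sum_{k_{i}\in\Z}2^{k_{i}}w_{i}(\{M\lambda_{i}>2^{k_{i}}\})^{1/p_{i}}$.

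The main obstacle will be making the two decompositions coexist. The level-set decomposition for the $J_{r}$-indices must be carried out on the whole sets $\Omega_{(k_{i})}$ rather than on their individual maximal cubes --- summing $w_{i}$-measures of single cubes to the power $1/p_{i}$ would overshoot the Lorentz norms --- while remaining compatible with the stopping-time structure used for the $J_{s}$-indices; this forces the applications of Lemma~\ref{lem:key} and of the Fujii--Wilson characteristic to be performed on (generally non-cube) localization sets. A more routine but still delicate point is the exponent bookkeeping, using $\sum_{i}1/p_{i}=1$ and $q_{i}=s_{i}-1/p_{i}$, that makes precisely $[\vec w]^{\vec q}$ and $[\vec w]_{FW}^{(1/p_{i})_{i\neq m}}$ appear.
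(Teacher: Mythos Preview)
Your reduction of the $J_{r}$-indices to indicator data is essentially equivalent to the paper's: the paper simply observes that, since the $L^{p_{i},1}$ norm is an $\ell^{1}$-sum over level sets, one may assume each $\lambda_{i}$ with $i\in J_{r}$ is $\{0,1\}$-valued and restrict the sum to the cubes where it equals $1$. This is cleaner than your level-set + Minkowski route and, in particular, sidesteps the ``FW on non-cube sets'' obstacle you flag: after the reduction the grid is unchanged and the Fujii--Wilson characteristic is used only on cubes.

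The genuine gap is elsewhere. You apply H\"older in $dw_{m}$ \emph{before} introducing stopping times, so you are left with $\|G\|_{L^{1/\alpha}(w_{m})}$, $1/\alpha=p_{m}'>1$. After freezing $\lambda_{i,Q}\approx\lambda_{i,F_{i}}$ via stopping families for $i\neq m$, the sum $\sum_{\vec F}$ sits \emph{inside} the $1/\alpha$-th power, and $(\sum_{\vec F}C_{\vec F})^{1/\alpha}\geq\sum_{\vec F}C_{\vec F}^{1/\alpha}$ goes the wrong way. Corollary~\ref{cor:char} bounds the expression with $\sum_{\vec F}$ \emph{outside} the power --- the smaller quantity --- so it does not apply as you invoke it. The paper reverses the order of operations: it introduces stopping families for \emph{all} $m$ indices first (the $J_{r}$ families are trivial, since those $\lambda_{i}$ are now constant), then performs the ``split by minimal cube'' \eqref{eq:convex-range:split-by-minimal}, which partitions the stopping tuples according to which $F_{j}$ is smallest. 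Because the $J_{r}$ families are trivial, the minimal index $j$ always lies in $J_{s}$; this is exactly where the sum $\sum_{j\in J_{s}}$ on the right-hand side comes from, and it is structural rather than a throwaway upper bound. Only \emph{after} this split does the paper apply H\"older --- with respect to the product measure (counting over $\vec F_{\neq j}$) $\times\, dw_{j}$ --- so that in the second factor the sum $\sum_{\vec F_{\neq j}}$ automatically appears outside the $1/\alpha$-th power, matching the left-hand side of Corollary~\ref{cor:char}. Your plan to bound everything by a single $j_{0}$-term is logically admissible, but the argument you sketch does not achieve it; to make it work you would have to reinstate the full stopping structure and the minimal-cube split, at which point the sum over $J_{s}$ reappears.
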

The restriction of the sum of FW characteristics to $J_{s}$ is related to the difference between local testing conditions for weak and strong type inequalities in \cite{arXiv:0911.3437}.
\begin{proof}
For the indices $i\in J_{r}$ we may assume that the function $\lambda_{i}$ is $\{0,1\}$-valued, and then restrict the sum to the set of cubes on which it takes the value $1$.
Having done that, we replace the $L^{p_{i},1}$ norm by the $L^{p_{i}}$ norm and construct stopping times as above.
The left-hand side of the conclusion is estimated by
\[
\lesssim
\sum_{F_1,\dots,F_{m}} \sum_{Q : \pi_{i}(Q)=F_{i}} \tau_{Q} \meas{Q} \prod_{i=1}^{m} \lambda_{i,F_{i}} (w_{i})_{Q}^{s_{i}}.
\]
The sum over $Q$ can only be non-trivial if $F_{1},\dots,F_{m}$ intersect non-trivially and the smallest set(s) $F_{j}$ among them satisfies $\pi_{i}(F_{j})=F_{i}$ for all $i$.
Since the functions $\lambda_{i}$, $i\in J_{r}$, are constant, this smallest set always comes from $\cF_{j}$ with $j\in J_{s}$.
Thus we can estimate
\begin{equation}
\label{eq:convex-range:split-by-minimal}
\sum_{F_1,\dots,F_{m}}
\leq
\sum_{j\in J_{s}} \ \sum_{F_{i}, i\neq j} \ \sum_{F_{j} : \pi_{i}(F_{j})=F_{i}}.
\end{equation}
By symmetry it suffices to consider one of these terms, and for simplicity of notation we assume $m\in J_{s}$ and consider $j=m$.
We estimate the contribution of that term by
\[
\leq
\int \sum_{F_{1},\dots,F_{m-1}} \Big(\sum_{F_{m}} 1_{F_{m}}\lambda_{m,F_{m}}\Big) \prod_{i=1}^{m-1} (1_{F_{i}} \lambda_{i,F_{i}})
\sum_{Q : \pi_{i}(Q)=F_{i}, i<m} \tau_{Q} 1_{Q} \prod_{i=1}^{m} (w_{i})_{Q}^{s_{i}-\delta_{im}} dw_{m}
\]
applying H\"older with the pair of conjugate exponents $p_{m}$ and $1/\alpha$, where $\alpha = \sum_{i=1}^{m-1} 1/p_{i}$,
we obtain
\begin{multline*}
\leq
\Big( \int \sum_{F_{1},\dots, F_{m-1}} \Big( \sum_{F_{m}} 1_{F_{m}}\lambda_{m,F_{m}} \Big)^{p_{m}} dw_{m} \Big)^{1/p_{m}}\\\cdot
\Big( \int \sum_{F_{1},\dots, F_{m-1}} \prod_{i=1}^{m-1} \lambda_{i,F_{i}}^{1/\alpha}
\Big(\sum_{Q : \pi_{i}(Q)=F_{i}} \tau_{Q} 1_{Q} \prod_{i=1}^{m} (w_{i})_{Q}^{s_{i}-\delta_{im}} \Big)^{1/\alpha} dw_{m} \Big)^{\alpha}
\end{multline*}
The first term is estimated by $\norm{ M\lambda_{m} }_{L^{p_{m}}(w_{m})}$.
In the second term we apply Corollary~\ref{cor:char} and note that $s_{m}-(1-\alpha)=s_{m}-1/p_{m}$.
\end{proof}

\begin{theorem}
\label{thm:concave-range}
Let $m\geq 2$ and $0<p_{i},s_{i}<\infty$, $1\leq i<m$, and let $\alpha:=\sum_{i=1}^{m-1} 1/p_{i}$.
Suppose $q_{i}:=s_{i}-1/p_{i}>0$ for $i<m$ and let $q_{m}:=\alpha$.
\begin{enumerate}
\item
If $\alpha\geq 1$, then
\begin{equation}
\label{eq:concave-range}
\norm[\Big]{ \sum_{Q\in \cD} \tau_{Q} \prod_{i=1}^{m-1} \lambda_{i,Q} (w_{i})_{Q}^{s_{i}} 1_{Q} }_{L^{1/\alpha}(w_{m})}
\lesssim
[\vec w]^{\vec q} [\vec w]_{FW}^{(1/p_{i})_{i\neq m}}
\prod_{i=1}^{m-1} \norm{ M\lambda_{i} }_{L^{p_{i}}(w_{i})}.
\end{equation}
\item
Let $\tilde E(Q)\subset Q$, $Q\in\cD$, be disjoint subsets.
Then
\begin{equation}
\label{eq:disjoint-supp}
\norm[\Big]{ \sum_{Q\in \cD} \tau_{Q} \prod_{i=1}^{m-1} \lambda_{i,Q} (w_{i})_{Q}^{s_{i}} 1_{\tilde E(Q)} }_{L^{1/\alpha}(w_{m})}
\lesssim
[\vec w]^{\vec q} [\vec w]_{FW}^{(1/p_{i})_{i\neq m}}
\prod_{i=1}^{m-1} \norm{ M\lambda_{i} }_{L^{p_{i}}(w_{i})}.
\end{equation}
\end{enumerate}
\end{theorem}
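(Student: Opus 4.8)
The plan is to deduce both parts from Corollary~\ref{cor:char}, the two cases differing only in the way the outer exponent $1/\alpha$ is handled. As in the proof of Lemma~\ref{lem:sum<1} we may assume that $\cD$ is finite (the general case follows by monotone convergence, with a constant independent of the cardinality of $\cD$), so that the stopping families $\cF_{i}$ are available. We extend the given data by setting $s_{m}:=1$. Then $\prod_{i=1}^{m}(w_{i})_{Q}^{s_{i}-\delta_{im}}=\prod_{i=1}^{m-1}(w_{i})_{Q}^{s_{i}}$, the positivity requirement on the $m$-th index in Corollary~\ref{cor:char} reads $q_{m}=s_{m}-(1-\alpha)=\alpha>0$, and consequently the hypotheses and the right-hand side of Corollary~\ref{cor:char} coincide with those of the present theorem.

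\emph{Part (1).} Build the stopping families $\cF_{i}$ for the sequences $\lambda_{i}$, $i=1,\dots,m-1$. By the construction of the stopping times one has $\lambda_{i,Q}\leq 2\lambda_{i,\pi_{i}(Q)}$, so that
\[
\sum_{Q\in\cD}\tau_{Q}\prod_{i=1}^{m-1}\lambda_{i,Q}(w_{i})_{Q}^{s_{i}}1_{Q}
\ \lesssim\
\sum_{F_{1},\dots,F_{m-1}}\ \prod_{i=1}^{m-1}\lambda_{i,F_{i}}\sum_{Q\,:\,\pi_{i}(Q)=F_{i}}\tau_{Q}\prod_{i=1}^{m-1}(w_{i})_{Q}^{s_{i}}1_{Q}.
\]
Raising this to the power $1/\alpha\leq 1$, applying subadditivity of $t\mapsto t^{1/\alpha}$ to the outer sum, and integrating $\dif w_{m}$, we find that the left-hand side of \eqref{eq:concave-range} raised to the power $1/\alpha$ is dominated by
\[
\sum_{F_{1},\dots,F_{m-1}}\ \prod_{i=1}^{m-1}\lambda_{i,F_{i}}^{1/\alpha}\int\Big(\sum_{Q\,:\,\pi_{i}(Q)=F_{i}}\tau_{Q}1_{Q}\prod_{i=1}^{m-1}(w_{i})_{Q}^{s_{i}}\Big)^{1/\alpha}\dif w_{m}.
\]
With $s_{m}=1$ this is exactly the quantity appearing, under the outer power $\alpha$, on the left-hand side of Corollary~\ref{cor:char}; that corollary therefore bounds it by the $1/\alpha$-th power of its right-hand side, and combining with the previous estimate and raising to the power $\alpha$ gives \eqref{eq:concave-range}. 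The hypothesis $\alpha\geq 1$ is used only in the subadditivity step.

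\emph{Part (2).} Here the disjointness of the sets $\tilde E(Q)$ takes over the roles of both the hypothesis $\alpha\geq 1$ and the subadditivity step. Since the $\tilde E(Q)$ are pairwise disjoint, at most one term of the defining sum is nonzero at any point, and therefore
\[
\Bignorm{\sum_{Q\in\cD}\tau_{Q}\prod_{i=1}^{m-1}\lambda_{i,Q}(w_{i})_{Q}^{s_{i}}1_{\tilde E(Q)}}_{L^{1/\alpha}(w_{m})}^{1/\alpha}
=\sum_{Q\in\cD}\Big(\tau_{Q}\prod_{i=1}^{m-1}\lambda_{i,Q}(w_{i})_{Q}^{s_{i}}\Big)^{1/\alpha}w_{m}(\tilde E(Q)).
\]
Introducing the stopping families and using $\lambda_{i,Q}\leq 2\lambda_{i,\pi_{i}(Q)}$ as before, this is
\[
\lesssim\ \sum_{F_{1},\dots,F_{m-1}}\ \prod_{i=1}^{m-1}\lambda_{i,F_{i}}^{1/\alpha}\sum_{Q\,:\,\pi_{i}(Q)=F_{i}}\Big(\tau_{Q}\prod_{i=1}^{m-1}(w_{i})_{Q}^{s_{i}}\Big)^{1/\alpha}w_{m}(\tilde E(Q)).
\]
For a fixed tuple $(F_{i})$ we write $w_{m}(\tilde E(Q))=\int 1_{\tilde E(Q)}\dif w_{m}$; since the $\tilde E(Q)$ are disjoint and $\tilde E(Q)\subseteq Q$, the pointwise inequality
\[
\sum_{Q\,:\,\pi_{i}(Q)=F_{i}}\Big(\tau_{Q}\prod_{i=1}^{m-1}(w_{i})_{Q}^{s_{i}}\Big)^{1/\alpha}1_{\tilde E(Q)}
\leq
\Big(\sum_{Q\,:\,\pi_{i}(Q)=F_{i}}\tau_{Q}1_{Q}\prod_{i=1}^{m-1}(w_{i})_{Q}^{s_{i}}\Big)^{1/\alpha}
\]
holds, so that the inner sum is at most $\int\big(\sum_{Q\,:\,\pi_{i}(Q)=F_{i}}\tau_{Q}1_{Q}\prod_{i=1}^{m-1}(w_{i})_{Q}^{s_{i}}\big)^{1/\alpha}\dif w_{m}$. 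We are again in position to apply Corollary~\ref{cor:char}, and raising the resulting inequality to the power $\alpha$ yields \eqref{eq:disjoint-supp}.

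The main conceptual step — estimating the sum over the cubes attached to a fixed stopping tuple, with the correct powers of the Muckenhoupt and Fujii--Wilson characteristics and of the Carleson norm — is precisely the content of Corollary~\ref{cor:char}, so the only thing that could really go wrong here is a misalignment of parameters; the bookkeeping choice $s_{m}=1$ is exactly what prevents that and makes the hypotheses and conclusion of Corollary~\ref{cor:char} match those of the theorem. The two power manipulations (subadditivity of $t\mapsto t^{1/\alpha}$ for $\alpha\geq 1$ in part (1), and the splitting of the $L^{1/\alpha}(w_{m})$-functional over a family with pairwise disjoint supports in part (2)) are routine, so I anticipate no serious obstacle beyond careful bookkeeping.
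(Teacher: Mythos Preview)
Your proposal is correct and follows essentially the same route as the paper: introduce the stopping families $\cF_{i}$, set $s_{m}=1$ so that Corollary~\ref{cor:char} applies with matching parameters, and handle the exponent $1/\alpha$ via subadditivity in part~(1) and via disjointness of the $\tilde E(Q)$ in part~(2). The only cosmetic difference is that in part~(2) you first expand the $L^{1/\alpha}$ norm as a sum over $Q$ before passing to the stopping decomposition, whereas the paper keeps the integral form and pulls the sum over $(F_{1},\dots,F_{m-1})$ out of the power using that at any point at most one $\tilde E(Q)$ contributes; the two manipulations are equivalent.
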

\begin{proof}[Proof of \eqref{eq:concave-range}]
The left-hand side of the conclusion can be estimated by
\[
\Big( \int \Big( \sum_{F_{1},\dots,F_{m-1}} \prod_{i=1}^{m-1} \lambda_{i,F_{i}}
\sum_{Q : \pi_{i}(Q)=F_{i}} \tau_{Q} \prod_{i=1}^{m-1}(w_{i})_{Q}^{s_{i}} 1_{Q} \Big)^{1/\alpha} dw_{m} \Big)^{\alpha}.
\]
By subadditivity of the function $x\mapsto x^{1/\alpha}$ this is bounded by
\[
\Big( \sum_{F_{1},\dots, F_{m-1}} \prod_{i=1}^{m-1} \lambda_{i,F_{i}}^{1/\alpha}
\int \Big( \sum_{Q : \pi_{i}(Q)=F_{i}} \tau_{Q} \prod_{i=1}^{m-1}(w_{i})_{Q}^{s_{i}} 1_{Q} \Big)^{1/\alpha} dw_{m} \Big)^{\alpha}.
\]
Applying Corollary~\ref{cor:char} with $s_{m}=1$ we obtain the claim.
\end{proof}
\begin{proof}[Proof of \eqref{eq:disjoint-supp}]
Once again, the left-hand side of the conclusion can be estimated by
\[
\Big( \int \Big( \sum_{F_{1},\dots,F_{m-1}} \prod_{i=1}^{m-1} \lambda_{i,F_{i}} \sum_{Q:\pi_{i}(Q)=F_{i}} \tau_{Q} \prod_{i=1}^{m-1} (w_{i})_{Q}^{s_{i}} 1_{\tilde E(Q)} \Big)^{1/\alpha} dw_{m} \Big)^{\alpha}.
\]
Now we use disjointness of the $\tilde E(Q)$'s to pull the sum over $F_{1},\dots,F_{m-1}$ out of the power $1/\alpha$ and then out of the integral, this gives the estimate
\[
\Big( \sum_{F_{1},\dots,F_{m-1}} \prod_{i=1}^{m-1} \lambda_{i,F_{i}}^{1/\alpha} \int \Big( \sum_{Q:\pi_{i}(Q)=F_{i}} \tau_{Q} \prod_{i=1}^{m-1} (w_{i})_{Q}^{s_{i}} 1_{\tilde E(Q)} \Big)^{1/\alpha} dw_{m} \Big)^{\alpha}.
\]
At this point we estimate $1_{\tilde E(Q)}\leq 1_{Q}$ and apply Corollary~\ref{cor:char} with $s_{m}=1$.
\end{proof}

\section{Applications}
\subsection{Sparse forms}
\begin{proof}[Proof of Theorem~\ref{thm:fractional-dualized}]
With the choice
\[
\lambda_{i,Q} := \Big( w_{i}(Q)^{-(1-\rho_{i})} \int_{Q} f_{i} w_{i} \Big)^{r_{i}}
\]
the left-hand side of the conclusion can be written in the form \eqref{eq:thm:convex-range} with $s_{i}=(1-\rho_{i})r_{i}$.
We apply Theorem~\ref{thm:convex-range} with the exponents $\tilde p_{i}$ given by
\[
\frac{1}{r_{i}\tilde p_{i}} = \frac{1}{t_{i}} - \rho_{i}
\]
and obtain the estimate
\[
[\vec w]^{\vec q} \sum_{j\in J_{s}} [\vec w]^{(1/\tilde p_{i})_{i\neq j}}
\prod_{i\in J_{s}} \norm{M\lambda_{i}}_{L^{\tilde p_{i}}(w_{i})} \prod_{i\in J_{r}} \norm{M\lambda_{i}}_{L^{\tilde p_{i},1}(w_{i})}.
\]
We compute
\[
\norm{M\lambda_{i}}_{L^{\tilde p_{i},s}(w_{i})}
\leq
\norm{(M_{\rho_{i},w_{i}}f_{i})^{r_{i}}}_{L^{\tilde p_{i},s}(w_{i})}
=
\norm{M_{\rho_{i},w_{i}}f_{i}}_{L^{r_{i} \tilde p_{i},r_{i} s}(w_{i})}^{r_{i}}
\lesssim
\norm{f_{i}}_{L^{t_{i},r_{i} s}(w_{i})}^{r_{i}},
\]
where $M_{\rho_{i},w_{i}}$ is the martingale fractional maximal function with respect to the reference measure $w_{i}d\mu$ and the last inequality is given by Proposition~\ref{prop:frac-max}.
Using this with $s=1$ for $i\in J_{r}$ and $s=\tilde p_{i}\geq t_{i}/r_{i}$ for $i\in J_{s}$ we obtain the claim.
\end{proof}
Theorem~\ref{thm:fractional-below-L1} is proved in the same way using \eqref{eq:concave-range} instead of Theorem~\ref{thm:convex-range}.

\subsubsection{Bilinear Hilbert transform}
One of the motivating examples for Theorem~\ref{thm:fractional-dualized} is the following sparse domination result for the bilinear Hilbert transform due to Culiuc, di Plinio, and Ou (in fact their result holds for more general trilinear forms, the reader is invited to consult their article for the definitions).
\begin{theorem}[{\cite[Theorem 2]{arXiv:1603.05317}}]
\label{thm:bht-sparse}
Let $\Lambda$ be the trilinear form adjoint to the bilinear Hilbert transform.
Consider exponents
\[
1< p_{1},p_{2},p_{3} < \infty
\text{ with }
\sum_{j=1}^{3} \frac{1}{\min(p_{j},2)} < 2.
\]
Then for any tuple $(g_{1},g_{2},g_{3})$ of smooth functions with compact support on $\R$ there exists a $1/6$-sparse collection $\sparse$ of intervals in $\R$ such that
\[
\abs{\Lambda(\vec g)}
\lesssim
\sum_{I\in\sparse} \meas{I} \prod_{j=1}^{3} \Big( \meas{I}^{-1} \int_{I} \abs{g_{j}}^{p_{j}} )^{1/p_{j}}.
\]
\end{theorem}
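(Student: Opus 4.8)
The plan is to follow the time--frequency-analytic stopping-time strategy of Culiuc--di Plinio--Ou, which grafts a Lerner-type iteration onto the classical Lacey--Thiele analysis of the bilinear Hilbert transform. Since the bilinear Hilbert transform is not a Calder\'on--Zygmund operator, one should not expect a pointwise sparse bound for $BHT(g_1,g_2)$ in the stated range; accordingly one dominates the \emph{dualized} trilinear form $\Lambda$, which is what the statement asks for. \textbf{Reduction to a model sum.} Using the standard Lacey--Thiele discretization one writes $\Lambda$ as an absolutely convergent average, over finitely many shifted dyadic grids and translation/modulation parameters, of model trilinear forms
\[
\Lambda_{\mathbf P}(\vec g) = \sum_{P\in\mathbf P} \meas{I_P}\prod_{j=1}^{3}\langle g_j,\phi_{P_j}\rangle,
\]
where $\mathbf P$ ranges over finite collections of tri-tiles $P=(P_1,P_2,P_3)$ with common spatial interval $I_P$ and $\phi_{P_j}$ is an $L^{2}$-normalized wave packet adapted to the tile $P_j$. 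It suffices to prove the sparse bound for each $\Lambda_{\mathbf P}$ with a constant uniform in $\mathbf P$ and in the grid/parameter choices, then sum a geometric series; the final sparse collection is the union of those produced for the finitely many parameters, which degrades the sparseness constant by a controlled factor --- after optimizing, this is where the explicit $1/6$ comes from.

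\textbf{Stopping time and recursion.} Fix the smallest grid interval $I_0$ containing the spatial supports of the tiles in $\mathbf P$ and of the $g_j$. Declare a dyadic $I\subsetneq I_0$ a \emph{stopping interval} if it is maximal subject to the requirement that, for some $j$, the local size of $g_j$ relative to the tiles $P$ with $I_P\subseteq I$ exceeds $C\,(\meas{I_0}^{-1}\int_{I_0}\abs{g_j}^{p_j})^{1/p_j}$, or the analogous inequality holds for the associated energy. Choosing $C$ large and using the Carleson (John--Nirenberg / Bessel) properties of the size and energy functionals, the union of the stopping intervals inside $I_0$ has measure at most $\tfrac56\meas{I_0}$, so that $E(I_0):=I_0\setminus\bigcup(\text{stopping children})$ has measure $\geq\tfrac16\meas{I_0}$. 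The tiles with $I_P$ not contained in any stopping interval form the \emph{principal part} at this level; the remaining tiles are partitioned according to the stopping interval containing $I_P$, and one recurses on each such interval in place of $I_0$. Since $\mathbf P$ is finite the iteration terminates, and the collection $\sparse$ of all intervals ever chosen as tops is $1/6$-sparse, witnessed by the disjoint sets $E(I)$.

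\textbf{The principal estimate.} The heart of the matter is to bound, at each level,
\[
\Bigabs{\sum_{\substack{P:\ I_P\subseteq I_0,\\ I_P\not\subseteq\,\text{any stopping child}}} \meas{I_P}\prod_{j=1}^{3}\langle g_j,\phi_{P_j}\rangle}
\;\lesssim\; \meas{I_0}\prod_{j=1}^{3}\Big(\meas{I_0}^{-1}\int_{I_0}\abs{g_j}^{p_j}\Big)^{1/p_j}.
\]
By the stopping construction, on the principal part the sizes satisfy $\operatorname{size}_j(g_j)\lesssim(\meas{I_0}^{-1}\int_{I_0}\abs{g_j}^{p_j})^{1/p_j}$ --- using the $L^{\min(p_j,2)}$-based size embedding when $p_j<2$ and a trivial H\"older reduction to the $L^2$ size when $p_j\geq2$ --- and the energies are controlled by the same averages together with $\meas{I_0}^{1/2}$. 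One then applies the Lacey--Thiele single-tree estimate together with the size and energy Carleson embedding theorems --- conveniently repackaged as the outer-$L^p$ H\"older inequality of Do--Thiele, cf.\ \cite{MR3312633} --- to obtain
\[
\Bigabs{\sum_P \meas{I_P}\prod_j\langle g_j,\phi_{P_j}\rangle}
\;\lesssim\; \meas{I_0}\prod_{j=1}^{3}\operatorname{size}_j(g_j)^{\theta_j}\Big(\meas{I_0}^{-1/2}\operatorname{energy}_j(g_j)\Big)^{1-\theta_j}
\]
for any admissible triple $\theta_j\in[0,1]$ with $\sum_j(1-\theta_j)<2$ and $\sum_j\theta_j>1$. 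The hypothesis $\sum_j 1/\min(p_j,2)\leq2$ is precisely what guarantees the existence of an admissible $\vec\theta$ for which, after inserting the size and energy bounds just obtained, the right-hand side collapses to the desired product of $L^{p_j}$-averages over $I_0$.

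\textbf{Summation and the main obstacle.} Each level of the recursion contributes a term $\lesssim\meas{I_0}\prod_j\langle\abs{g_j}^{p_j}\rangle_{I_0}^{1/p_j}$ indexed by the top $I_0\in\sparse$; summing over all levels gives $\abs{\Lambda_{\mathbf P}(\vec g)}\lesssim\sum_{I\in\sparse}\meas{I}\prod_j\langle\abs{g_j}^{p_j}\rangle_I^{1/p_j}$, and the reduction step finishes the proof. I expect the principal estimate to be the main obstacle: it amounts to a localized and quantitatively sharp reproof of the Lacey--Thiele boundedness of the bilinear Hilbert transform, requiring the full time--frequency machinery (tree selection, the single-tree estimate, the size and energy embeddings) together with careful exponent bookkeeping that lands exactly on the boundary $\sum_j 1/\min(p_j,2)=2$. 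By contrast, the stopping-time construction and the summation of the recursion are the by-now-standard ``soft'' part of sparse domination, needing only the Carleson-type monotonicity of the relevant size and energy functionals.
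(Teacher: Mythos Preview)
The paper does not prove this theorem at all: Theorem~\ref{thm:bht-sparse} is quoted verbatim from \cite{arXiv:1603.05317} and used as a black box to deduce Corollary~\ref{cor:bht-mixed-char} via Theorem~\ref{thm:fractional-dualized}. There is therefore nothing in the present paper to compare your argument against.

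That said, your sketch is a faithful outline of the Culiuc--di Plinio--Ou strategy: reduction to a model form, a stopping-time/recursion that produces the sparse collection, a localized size--energy (outer $L^{p}$) estimate for the principal part, and summation over the levels. Your identification of the principal estimate as the substantive step and of the exponent condition $\sum_{j}1/\min(p_{j},2)\leq 2$ as exactly the admissibility constraint in the size/energy interpolation is correct. If you want to submit this as an actual proof rather than a plan, the only places requiring genuine care are (i) the precise definition of the localized size and energy functionals used in the stopping (CDO work with a Carleson-type localized size rather than the naive one, which is what makes the packing argument give the $\tfrac{5}{6}$ measure bound uniformly), and (ii) the endpoint bookkeeping when some $p_{j}=1$ or $p_{j}=2$, where the embedding theorems need their sharp (weak-type or BMO-type) formulations. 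None of this is wrong in your outline, just compressed.
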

\begin{corollary}
\label{cor:bht-mixed-char}
Let $1<q_{1},q_{2},q_{3}<\infty$ with $\sum_{j=1}^{3}1/q_{j}=1$ and let $p_{j}<q_{j}$ be as in Theorem~\ref{thm:bht-sparse}.
Then
\[
\abs{\Lambda(\widevec{fw})}
\lesssim
[(w_{i})^{p_{i}}]^{(1/p_{i}-1/q_{i})_{i}} \sum_{j} [(w_{i})^{p_{i}}]_{FW}^{(1/q_{i})_{i\neq j}} \prod_{i=1}^{3} \norm{f_{i}}_{L^{q_{i}}(w_{i}^{p_{i}})}.
\]
\end{corollary}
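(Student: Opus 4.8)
The plan is to chain the sparse domination of Theorem~\ref{thm:bht-sparse} with the weighted estimate of Theorem~\ref{thm:fractional-dualized}. Assuming, as we may by a standard density/Fatou argument, that the products $f_{j}w_{j}$ are smooth and compactly supported, Theorem~\ref{thm:bht-sparse} applied to $(f_{1}w_{1},f_{2}w_{2},f_{3}w_{3})$ produces a $1/6$-sparse family $\sparse$ of intervals with
\[
\abs{\Lambda(\widevec{fw})}
\lesssim
\sum_{I\in\sparse}\meas{I}\prod_{j=1}^{3}\Big(\meas{I}^{-1}\int_{I}\abs{f_{j}w_{j}}^{p_{j}}\Big)^{1/p_{j}}.
\]
Writing $\abs{f_{j}w_{j}}^{p_{j}}=\tilde f_{j}\tilde w_{j}$ with $\tilde f_{j}:=\abs{f_{j}}^{p_{j}}$ and $\tilde w_{j}:=w_{j}^{p_{j}}$, the right-hand side is the multi(sub)linear sparse form with $m=3$, $\vec\rho=0$, $\vec r=(1/p_{j})_{j}$ applied to the tuple with entries $\tilde f_{j}\tilde w_{j}$, the one caveat being that $\sparse$ consists of arbitrary intervals rather than cubes of a dyadic grid.

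The reduction to a dyadic grid is the only genuinely extra ingredient. I would invoke the standard adjacent-grids fact on $\R$: there are dyadic grids $\cD^{(0)},\cD^{(1)},\cD^{(2)}$ such that every interval $I$ is contained in a cube $Q_{I}$ belonging to one of them with $\meas{Q_{I}}\leq 3\meas{I}$. Partition $\sparse$ according to which $\cD^{(\ell)}$ contains the chosen $Q_{I}$, and on $\cD^{(\ell)}$ set $\tau^{(\ell)}_{Q}:=\#\{I\in\sparse:Q_{I}=Q\}$. The disjoint sets $E(I)\subset I\subseteq Q_{I}$ with $\meas{E(I)}\geq\meas{I}/6\geq\meas{Q_{I}}/18$ give $\tau^{(\ell)}_{Q}\leq 18$ (so $\tau^{(\ell)}$ is a legitimate finite-valued sequence) and, for every $R\in\cD^{(\ell)}$, $\sum_{Q\subseteq R}\tau^{(\ell)}_{Q}\meas{Q}\leq 3\sum_{I:I\subseteq R}\meas{I}\leq 18\meas{R}$, using $\sum_{I\subseteq R}\meas{I}\leq 6\meas{R}$ from $1/6$-sparseness; hence $\norm{\tau^{(\ell)}}_{\Car}\leq 18$. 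Since $\meas{I}^{-1}\int_{I}g\leq 3\,\meas{Q_{I}}^{-1}\int_{Q_{I}}g$ and $\meas{I}\leq\meas{Q_{I}}$ for $g\geq0$, each summand over $\sparse$ is at most $3^{\sum_{j}1/p_{j}}\leq 9$ times the corresponding dyadic summand, as $\sum_{j}1/p_{j}\leq\sum_{j}1/\min(p_{j},2)\leq 2$. Thus $\abs{\Lambda(\widevec{fw})}$ is controlled, up to an absolute constant, by the sum over $\ell\in\{0,1,2\}$ of the sparse forms $\sform^{\vec r}(\tau^{(\ell)},\widevec{\tilde f\tilde w})$ with $\vec r=(1/p_{j})_{j}$, $\widevec{\tilde f\tilde w}$ the tuple with entries $\tilde f_{j}\tilde w_{j}$, and $\norm{\tau^{(\ell)}}_{\Car}\lesssim1$.

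It then remains to apply Theorem~\ref{thm:fractional-dualized} on each $\cD^{(\ell)}$ with $m=3$, $\rho_{i}=0$, $r_{i}=1/p_{i}$, $t_{i}:=q_{i}/p_{i}$, $J_{s}=\{1,2,3\}$, $J_{r}=\emptyset$, weights $\tilde w_{i}$ and functions $\tilde f_{i}$. The hypotheses hold: $1<t_{i}<\infty=1/\rho_{i}$ because $p_{i}<q_{i}$, and the admissibility condition $\sum_{i}r_{i}(1/t_{i}-\rho_{i})=\sum_{i}1/q_{i}=1$ is exactly $\sum_{i}1/q_{i}=1$. Translating the output back: $r_{i}/t_{i}'=1/p_{i}-1/q_{i}$, so the Muckenhoupt factor is $[(w_{i})^{p_{i}}]^{(1/p_{i}-1/q_{i})_{i}}$; $r_{i}(1/t_{i}-\rho_{i})=1/q_{i}$, so the Fujii--Wilson factors are $[(w_{i})^{p_{i}}]_{FW}^{(1/q_{i})_{i\neq j}}$; and $\norm{\tilde f_{i}}_{L^{t_{i}}(\tilde w_{i})}^{r_{i}}=\big(\int\abs{f_{i}}^{q_{i}}w_{i}^{p_{i}}\big)^{1/q_{i}}=\norm{f_{i}}_{L^{q_{i}}(w_{i}^{p_{i}})}$. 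Summing the three contributions $\ell=0,1,2$ and absorbing the absolute three-grids constant into the implicit constant yields exactly the asserted inequality.

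The main obstacle is mild: it is the adjacent-grids reduction, in particular checking that the $\{0,1,\dots\}$-valued sequences $\tau^{(\ell)}$ keep a bounded Carleson norm even though several intervals of $\sparse$ may collapse onto a single dyadic cube. After that, the proof is a substitution of exponents into Theorem~\ref{thm:fractional-dualized}.
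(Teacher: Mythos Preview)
Your proof is correct and follows exactly the paper's approach: apply Theorem~\ref{thm:fractional-dualized} with $m=3$, $J_{r}=\emptyset$, $\rho_{i}=0$, $r_{i}=1/p_{i}$, $t_{i}=q_{i}/p_{i}$, functions $\abs{f_{i}}^{p_{i}}$ and weights $w_{i}^{p_{i}}$. The paper states this as a one-line substitution; your adjacent-grids reduction (passing from the sparse family of intervals in Theorem~\ref{thm:bht-sparse} to Carleson sequences on genuine dyadic grids) is a standard detail that the paper leaves implicit, and your verification of the Carleson bound for the sequences $\tau^{(\ell)}$ is clean.
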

\begin{proof}
Apply Theorem~\ref{thm:fractional-dualized} with $m=3$, $J_{r}=\emptyset$, functions $g_{i}=\abs{f_{i}}^{p_{i}}$, weights $v_{i}=w_{i}^{p_{i}}$, and exponents $r_{i}=1/p_{i}$, $t_{i}=q_{i}/p_{i}$, $\rho_{i}=0$.
\end{proof}
Since $\Lambda$ is a multilinear operator, another natural formulation of Corollary~\ref{cor:bht-mixed-char} is
\[
\abs{\Lambda(\widevec{fu})}
\lesssim
[(u_{i})^{p_{i}\frac{q_{i}-1}{q_{i}-p_{i}}}]^{(1/p_{i}-1/q_{i})_{i}} \sum_{j} [(u_{i})^{p_{i}\frac{q_{i}-1}{q_{i}-p_{i}}}]_{FW}^{(1/q_{i})_{i\neq j}} \prod_{i=1}^{3} \norm{f_{i}}_{L^{q_{i}}(u_{i})},
\]
which can be obtained from the above formulation by substituting $u_{i} = w_{i}^{\frac{q_{i}-p_{i}}{q_{i}-1}}$.

In the case of dependent weights $\prod_{i=1}^{3} (w_{i}^{p_{i}})^{1/p_{i}-1/q_{i}}=1$ Corollary~\ref{cor:bht-mixed-char} recovers \cite[Theorem 3]{arXiv:1603.05317}.
Indeed, by Lemma~\ref{lem:FW-Ap} we obtain
\[
\abs{\Lambda(\widevec{fw})} \lesssim [(w_{i})^{p_{i}}]^{(1/p_{i}-1/q_{i})_{i} \max_{j} \frac{q_{j}}{q_{j}-p_{j}}} \prod_{i=1}^{3} \norm{f_{i}}_{L^{q_{i}}(w_{i}^{p_{i}})}.
\]
The $i$-th norm on the right-hand side can be written as $\norm{f_{i}w_{i}}_{L^{q_{i}}(v_{i})}$ with $v_{i}=w_{i}^{p_{i}-q_{i}}$.
The weights $v_{i}$ then satisfy the relation $\prod_{i} v_{i}^{1/q_{i}} = 1$ and we have the weighted norm estimate
\[
\abs{\Lambda(\widevec{f})} \lesssim [(v_{i})^{\frac{p_{i}}{p_{i}-q_{i}}}]^{(1/p_{i}-1/q_{i})_{i} \max_{j} \frac{q_{j}}{q_{j}-p_{j}}} \prod_{i=1}^{3} \norm{f_{i}}_{L^{q_{i}}(v_{i})},
\]
which is \cite[Theorem 3]{arXiv:1603.05317}.

\subsection{Weak type maximal inequality}
\label{sec:weak-max}
\begin{proof}[Proof of \eqref{eq:max-weak}]
The set $\{\cM^{\vec r}_{\vec\rho}(\widevec{fw}) > \lambda \}$ is the disjoint union of the maximal cubes contained in it.
Call the family of these maximal cubes $\sparse$.
We have
\[
\lambda^{1/\alpha} w_{m} \{\cM^{\vec r}_{\vec\rho}(\widevec{fw}) > \lambda \}
\leq
\sum_{Q\in\sparse} \prod_{i=1}^{m-1} \Big(\meas{Q}^{-(1-\rho_{i})} \int_{Q} f_{i}w_{i} \Big)^{r_{i}/\alpha} w_{m}(Q).
\]
Applying Hölder's inequality in the inner integral we obtain the estimate
\[
\leq
\sum_{Q\in\sparse} w_{m}(Q) \prod_{i=1}^{m-1} \Big(\meas{Q}^{-(1-\rho_{i})} \big(\int_{Q} f_{i}^{t_{i}}w_{i}\big)^{1/t_{i}} \big( \int_{Q} w_{i} \big)^{1/t_{i}'} \Big)^{r_{i}/\alpha}
\]
\[
=
\sum_{Q\in\sparse} (w_{m})_{Q} \prod_{i=1}^{m-1} \Big(\big(\int_{Q} f_{i}^{t_{i}}w_{i}\big)^{1/t_{i}} ( w_{i} )_{Q}^{1/t_{i}'} \Big)^{r_{i}/\alpha}
\]
by definition of the weight characteristic this is
\[
\leq [\vec w]^{\vec q/\alpha}
\sum_{Q\in\sparse} \prod_{i=1}^{m-1} \Big(\int_{Q} f_{i}^{t_{i}}w_{i}\Big)^{r_{i}/(t_{i} \alpha)}
\]
By Hölder's inequality with exponents $(1/t_{i}-\rho_{i})^{-1} \alpha/r_{i}$ this is
\[
\leq [\vec w]^{\vec q/\alpha}
\prod_{i=1}^{m-1} \Big( \sum_{Q\in\sparse} \big(\int_{Q} f_{i}^{t_{i}}w_{i}\big)^{(1/t_{i}-\rho_{i})^{-1}/t_{i}} \Big)^{(1/t_{i}-\rho_{i})r_{i}/\alpha}.
\]
We can view each sum as a little $\ell^{x}$ norm with $x>1$ and estimate it by the corresponding $\ell^{1}$ norm.
This gives the estimate
\[
\leq [\vec w]^{\vec q/\alpha}
\prod_{i=1}^{m-1} \Big( \sum_{Q\in\sparse} \int_{Q} f_{i}^{t_{i}}w_{i} \Big)^{r_{i}/(t_{i} \alpha)}.
\]
Since the sets $Q$ are pairwise disjoint we have obtained the required estimate for the $1/\alpha$-th power of the left-hand side of the conclusion.
\end{proof}

\subsection{Strong type maximal inequality}
\label{sec:max-strong}
\begin{proof}[Proof of \eqref{eq:max-strong}]
By the monotone convergence theorem we may assume that $\cD$ is finite.
We construct the stopping family $\sparse$ as follows.
Firstly, we put the maximal elements of $\cD$ into $\sparse$.
If $Q\in\sparse$, then we add to $\sparse$ all maximal subcubes $Q'$ of $Q$ on which the argument of the supremum in \eqref{eq:mult-frac-max} exceeds its value on $Q$ at least by the factor $(2m)^{\sum_{i} r_{i}}$.
Then in particular
\[
2m \meas{Q}^{-(1-\rho_{i})} \int_{Q} f_{i}w_{i}
\leq
\meas{Q'}^{-(1-\rho_{i})} \int_{Q'} f_{i}w_{i}
\]
for some $i$.
We write this as
\[
2m \meas{Q'} \int_{Q} f_{i}w_{i}
\leq
\meas{Q} (\meas{Q'}/\meas{Q})^{\rho_{i}} \int_{Q'} f_{i}w_{i}
\leq
\meas{Q} \int_{Q'} f_{i}w_{i}.
\]
Summing this inequality over all cubes $Q'$ we see that the stopping collection $\sparse$ is sparse.
Moreover,
\[
\cM^{\vec r}_{\vec\rho}(\widevec{fw})
\lesssim
\sum_{Q\in \sparse} \prod_{i=1}^{m-1} \lambda_{i,Q} (w_{i})_{Q}^{s_{i}} 1_{\tilde E(Q)}
\]
with disjoint subsets $\tilde E(Q)\subset Q$ and notation from Theorem~\ref{thm:fractional-dualized}.
We conclude using \eqref{eq:disjoint-supp}.
\end{proof}

\section{Dependent weights}
\label{sec:dependent}

\begin{proof}[Proof of Lemma~\ref{lem:FW-Ap}]
By homogeneity we may assume $\sum_{i} \beta_{i} = \sum_{i}q_{i} = 1$.

Let $Q_{0}$ be a dyadic cube, by a stopping time argument we can find a sparse collection of subcubes $Q\subseteq Q_{0}$ with disjoint major subsets $E(Q)\subset Q$ (``major'' means that $\meas{Q}\lesssim \meas{E(Q)}$) such that
\[
\int_{Q_{0}} \mathcal{M}^{\vec\beta}(1_{Q_{0}}\vec w_{(j)})
\lesssim
\sum_{Q} \meas{Q} \prod_{i} (w_{i})_{Q}^{\beta_{i}}.
\]
This can be written as
\[
\sum_{Q} \Big( \int_{E(Q)} \prod_{i} w_{i}^{\beta_{i}} \Big)
\cdot \underbrace{\meas{Q} \frac{\prod_{i} (w_{i})_{Q}^{\beta_{i}} w_{i}(E(Q))^{\gamma q_{i}-\beta_{i}}}%
{\prod_{i} w_{i}(E(Q))^{\gamma q_{i}-\beta_{i}} \int_{E(Q)} \prod_{i} w_{i}^{\beta_{i}}}}_{*},
\]
and it suffices to show $*\lesssim [\vec w]^{\gamma \vec q}$ uniformly in $Q$.

Note that $\gamma q_{i}-\beta_{i}\geq 0$ for all $i$, so by H\"older's inequality we have
\[
\meas{E(Q)}
=
\int_{E(Q)} \prod_{i=1}^{m} w_{i}^{q_{i}}
=
\int_{E(Q)} \prod_{i=1}^{m} w_{i}^{\frac{ \gamma q_{i}-\beta_{i}}{\gamma}} \prod_{i=1}^{m} w_{i}^{\frac{\beta_{i}}{\gamma}}
\leq
\prod_{i=1}^{m} \Big( \int_{E(Q)} w_{i} \Big)^{\frac{ \gamma q_{i}-\beta_{i}}{\gamma}}
\cdot
\Big( \int_{E(Q)} \prod_{i=1}^{m} w_{i}^{\beta_{i}} \Big)^{\frac{1}{\gamma}}.
\]
Raising this inequality to the power $\gamma$ and applying it in the denominator we obtain
\[
* \leq \meas{Q} \meas{E(Q)}^{-\gamma} \prod_{i} (w_{i})_{Q}^{\beta_{i}} w_{i}(E(Q))^{\gamma q_{i}-\beta_{i}}.
\]
Using positivity of the exponents $\gamma q_{i}-\beta_{i}$ and the fact that $E(Q)\subset Q$ are major subsets, we obtain the estimate
\[
* \lesssim
\meas{Q}^{1-\gamma} \prod_{i} (w_{i})_{Q}^{\beta_{i}} w_{i}(Q)^{\gamma q_{i}-\beta_{i}}
=
\prod_{i} (w_{i})_{Q}^{\gamma q_{i}}
\leq
[\vec w]^{\gamma \vec q}.
\qedhere
\]
\end{proof}

\printbibliography
\end{document}